\author{Simone Blumer\\ \tiny{Università degli Studi di Milano-Bicocca}}
\DeclareMathOperator{\image}{Im}
\DeclareMathOperator{\ind}{ind}
\DeclareMathOperator{\res}{res}
\newcommand{\F}{\mathbb{F}}
\DeclareFontFamily{U}{wncy}{}
\DeclareFontShape{U}{wncy}{m}{n}{<->wncyr10}{}
\DeclareSymbolFont{mcy}{U}{wncy}{m}{n}
\DeclareMathSymbol{\Sha}{\mathord}{mcy}{"58}
\DeclareMathSymbol{\sha}{\mathord}{mcy}{"78}
\title{Kurosh theorem for certain Koszul Lie algebras}
\date{\today}
\begin{document}
	\maketitle
	
	\newtheorem{thm}{Theorem}[section]
	\newtheorem*{thmA}{Theorem A}
	\newtheorem*{thmB}{Theorem B}
	\newtheorem*{thm*}{Theorem}
	\newtheorem{cor}[thm]{Corollary}
	\newtheorem{lem}[thm]{Lemma}
	\newtheorem{prop}[thm]{Proposition}
	\newtheorem{defin}[thm]{Definition}
	\newtheorem{exam}[thm]{Example}
	\newtheorem{examples}[thm]{Examples}
	\newtheorem{rem}[thm]{Remark}
	\newtheorem{case}{\sl Case}
	\newtheorem{claim}{Claim}
	\newtheorem{fact}[thm]{Fact}
	\newtheorem{question}[thm]{Question}
	\newtheorem{conj}[thm]{Conjecture}
	\newtheorem*{notation}{Notation}
	\swapnumbers
	\newtheorem{rems}[thm]{Remarks}
	\newtheorem*{acknowledgement}{Acknowledgement}
	
	\newtheorem{questions}[thm]{Questions}
	\numberwithin{equation}{section}
	
	\newcommand{\mc}[1]{\mathscr{#1}}
	\newcommand{\bu}{\bullet}
	\newcommand{\tor}{\operatorname{Tor}}
	\newcommand{\rad}{\operatorname{rad}}
	\newcommand{\set}[2]{\left\lbrace{#1}\ \big\vert\ {#2}\right\rbrace}
	\newcommand{\pres}[2]{\left\langle{#1}\ \big\vert\ {#2}\right\rangle}
	\newcommand{\invamalg}{\mathbin{\rotatebox[origin=c]{180}{$\amalg$}}}
	\newcommand{\ul}{{\mc U(\mc L)}}
	\newcommand{\li}{{\mc L}}
	\newcommand{\bbu}{{\bu,\bu}}
	\newcommand{\ext}{\operatorname{Ext}}
	\newcommand{\argu}{\hbox to 1.5ex{\hrulefill}} 
	\newcommand{\bbul}{{\bullet,\bullet}}
	\newcommand{\ten}{{T_\bullet}}
	\newcommand{\sym}{{S_\bullet}}
	\newcommand{\exa}{{\Lambda_\bullet}}
	\renewcommand{\hom}{\operatorname{Hom}}
	\newcommand{\As}{{A_{\bullet}}}
	\newcommand{\del}{\partial}
	\newcommand{\eps}{\varepsilon}
	\newcommand{\gr}{\operatorname{Gr}}
	\newcommand{\gen}[1]{\langle{#1}\rangle}
	\newcommand{\mf}[1]{\mathfrak{#1}}
	\newcommand{\abs}[1]{\vert{#1}\vert}

\begin{abstract}
The Kurosh theorem for groups provides the structure of any subgroup of a free product of groups and its proof relies on Bass-Serre theory of groups acting on trees. In the case of Lie algebras, such a general theory does not exists and the Kurosh theorem is false in general, as it was first noticed by Shirshov in \cite{shir}. However, we prove that, for a class of positively graded Lie algebras satisfying certain local properties in cohomology, such a structure theorem holds true for subalgebras generated in degree 1. Such class consists of Lie algebras, which have all the subalgebras generated in degree $1$ that are Koszul.
\end{abstract}
	
\tableofcontents
	\section*{Introduction}
	Consider a positively-graded Lie algebra $\li$ over a field $\F$. This means that $\li$ is a Lie algebra endowed with a vector space decomposition \[\li=\displaystyle\bigoplus _{i\geq 1}\li_i\] such that the Lie bracket is a graded $\F$-linear map $[\argu,\argu]:\li\otimes_\F \li\to \li$, i.e., $[\li_n,\li_m]\subseteq \li_{n+m}$.
	These Lie algebras are much well behaved than their ungraded counterparts, and they share many cohomological features with the class of pro-$p$ groups (cf. \cite{weig},\cite{cmp}). 
	The cohomology of a graded Lie algebra can be endowed with a second grading, the \textit{internal} one, that makes the cohomology algebra $H^\bu(\li,\F)$ into a bigraded algebra. 
	Among the $\mathbb N$-graded Lie algebras there is the class of the Koszul Lie algebras that are those Lie algebras $\li$ satisfying the following constraint on the bigraded cohomology algebra: 
	\begin{center}
		$H^{ij}(\li,\F)=0$ for $i\neq j$.
	\end{center}
	It is easy to see that Koszul Lie algebras are quadratic, in the sense that they have all of the generators of degree $1$ and the relations have degree $2$.
	In this paper we introduce a class of Lie algebras which have a more homogeneous Koszulity property, the Bloch-Kato Lie algebras. 
	A Lie algebra is \textit{Bloch-Kato} if it is Koszul, as well as any of its subalgebras that are generated in degree $1$.
	
	Since the cohomology ring of a Lie algebra is a graded-commutative algebra, it is natural to try and translate the above property for this class of algebras. 
	In particular, we see that the Bloch-Kato property of a Lie algebra is closely related with a Koszulity property on its cohomology, which was introduced in the commutative context by A. Conca \cite{conca}. 
	We say that a graded $\F$-algebra $A$ is \textit{universally Koszul} if every two-sided ideal $I$ of $A$ that is generated in degree $1$ satisfies:
	\begin{center}
		$\ext^{ij}_A(A/I,\F)=0$ if $i\neq j$.
	\end{center}
This is equivalent to requiring that the $A$-module $A/I$ has a linear free $A$-resolution. 
	
	We prove the following three main results.
	\begin{thmA}
		Let $\li$ be an $\mathbb N$-graded $\F$-Lie algebra, with cohomology ring $A=H^\bu(\li,\F)$. Then $\li$ is Bloch-Kato if, and only if, $A$ is universally Koszul.
	\end{thmA}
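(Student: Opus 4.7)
The plan is to match degree-$1$-generated subalgebras $\mc{M} \subseteq \li$ with degree-$1$-generated two-sided ideals $I \subseteq A$, and then invoke module-level Koszul duality to identify the Koszulity of $\mc{M}$ with the linearity of the minimal $A$-resolution of $A/I$.

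First I would set up the bijection. Because $\li$ is positively graded, $[\li,\li]\cap \li_1 = 0$, so $A^1 = H^1(\li,\F) \cong (\li_1)^*$, and the perfect pairing $\li_1 \times A^1 \to \F$ induces a bijection between subspaces $V' \subseteq \li_1$ and subspaces $W \subseteq A^1$ via $W = (V')^\perp$. Since $A$ is graded-commutative, a left ideal generated in degree $1$ is automatically two-sided, so this yields a bijection $\mc{M} = \langle V' \rangle \leftrightarrow I = (W)$ between degree-$1$-generated Lie subalgebras of $\li$ and degree-$1$-generated ideals of $A$. Both hypotheses of the theorem already force $\li$ Koszul: the Bloch-Kato property by taking $\mc{M} = \li$, and universal Koszulity by taking $I = (A^1)$, for which $A/I = \F$ and $\ext^{ij}_A(\F,\F) = 0$ for $i \neq j$ is precisely the Koszul condition on $\li$. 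Hence in both directions I may assume that $U := U(\li)$ is Koszul with Koszul dual $A$.

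The heart of the argument is the following chain of equivalences, valid for each pair $(\mc{M},I)$. By PBW, $U$ is free as a $U(\mc{M})$-module, and Shapiro's lemma gives
\[
H^{ij}(\mc{M},\F) \;\cong\; \ext^{ij}_{U(\mc{M})}(\F,\F) \;\cong\; \ext^{ij}_U\bigl(U \otimes_{U(\mc{M})} \F,\ \F\bigr),
\]
so $\mc{M}$ is Koszul iff the quadratic $U$-module $M := U \otimes_{U(\mc{M})} \F$ (generated in degree $0$ with degree-$1$ relations $V' \subseteq U_1$) admits a linear minimal resolution. But the module Koszul dual of $M$ is precisely the $A$-module $A/I$: its generator is dual to $M_0 = \F$, and its degree-$1$ relations are dual to $V'$, i.e.\ the subspace $W = (V')^\perp \subseteq A^1$. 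By Koszul duality for modules over Koszul algebras, $M$ has a linear $U$-resolution iff $A/I$ has a linear $A$-resolution, i.e.\ iff $\ext^{ij}_A(A/I,\F) = 0$ for $i \neq j$. Combined with the bijection, this identifies Bloch-Kato for $\li$ with universal Koszulity of $A$.

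The main obstacle is the module-level Koszul duality step: one must verify that the bigraded duality really does interchange $M$ over $U$ with $A/I$ over $A$, and that linearity of resolutions transfers between them. The cleanest way is to construct the bigraded Priddy resolution of $\F$ over $U$ out of $A$, observe that it restricts to candidate resolutions of $M$ and of $A/I$ simultaneously, and check that minimality on each side is governed by the very same bigraded vanishing, so that the two linearity conditions are tautologically equivalent.
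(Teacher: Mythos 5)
Your proposal is correct in substance and runs on the same engine as the paper's proof --- PBW freeness of $\ul$ over $\mc U(\mc M)$ combined with Polishchuk--Positselski Koszul duality --- but you package it differently and more symmetrically. The paper's forward direction realizes $H^\bu(\mc M,\F)$ as a quotient $A/J$ via the surjective restriction map, deduces that $A\to A/J$ is a Koszul homomorphism from Corollary I.5.9 of \cite{pp} (since $A^!=\ul$ is free over $\mc U(\mc M)^{}$ by PBW), and then needs a separate long-exact-sequence argument to show that $J$ is in fact generated by $J_1=I_1$; the backward direction uses the injectivity of the dual morphism $(A/I)^!\to A^!$ from the same corollary. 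Your single chain of equivalences --- $\mc M$ Koszul iff $\ul\otimes_{\mc U(\mc M)}\F$ is a Koszul $\ul$-module (Eckmann--Shapiro) iff $A/I$ is a Koszul $A$-module (module-level quadratic duality, noting that the quadratic dual of $\ul/\ul\mc M_1$ over $\ul$ is $A/A\mc M_1^\perp=A/I$) --- handles both implications at once and avoids the kernel-generation step entirely. The one tool you use that the paper does not state, namely that a quadratic module over a Koszul algebra is Koszul if and only if its quadratic dual module is, is genuinely in \cite{pp} and is essentially what Corollary I.5.9 encodes, so invoking it is a fair citation rather than a gap.

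One point does need repair. You claim that universal Koszulity forces $\li$ Koszul because ``$\ext^{ij}_A(\F,\F)=0$ for $i\neq j$ is precisely the Koszul condition on $\li$.'' It is not: that vanishing is Koszulity of the algebra $A$, whereas Koszulity of $\li$ is the vanishing of $A^{ij}=\ext^{ij}_{\ul}(\F,\F)$ off the diagonal, and a priori one does not know that $A=\ul^!$. The implication is still true, but it requires the intermediate steps: $A$ Koszul $\Rightarrow$ $A$ is $1$-generated $\Rightarrow$ $H^\bu(\ul)$ is generated in degree one $\Rightarrow$ $\ul$ is Koszul $\Rightarrow$ $A=\ul^!$. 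This is exactly the patch the paper supplies in the last sentence of its proof, and without it your quadratic-dual computations have no bridge between $A$-modules and $\ul$-modules, so it should be written out. (The forward direction has no such issue, since $\li$ is a $1$-generated subalgebra of itself and is therefore Koszul by hypothesis.)
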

	\begin{thmB}
		Let $\mc A$ and $\mc B$ be two Bloch-Kato Lie algebras. Then their free product $\mc A\amalg\mc B$ is Bloch-Kato, too. 
	\end{thmB}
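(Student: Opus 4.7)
The plan is to apply Theorem~A. Set $\alpha = H^\bu(\mc A, \F)$ and $\beta = H^\bu(\mc B, \F)$; Theorem~A says $\alpha$ and $\beta$ are universally Koszul, and it suffices to prove that $\gamma := H^\bu(\mc A \amalg \mc B, \F)$ is universally Koszul, since Theorem~A will then yield that $\mc A \amalg \mc B$ is Bloch-Kato.

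The first step is a structural identification of $\gamma$. Because the universal enveloping functor sends coproducts to coproducts, $U(\mc A \amalg \mc B) \cong U(\mc A) \ast U(\mc B)$ in augmented associative $\F$-algebras. A Mayer-Vietoris computation for free products of augmented algebras then gives an isomorphism of bigraded $\F$-algebras $\gamma \cong \alpha \sqcup \beta$, the coproduct in connected augmented graded-commutative algebras. Concretely this is the \emph{wedge}: $\gamma_0 = \F$, $\gamma_n = \alpha_n \oplus \beta_n$ for $n \geq 1$, and all products between $\alpha_+$ and $\beta_+$ vanish. In particular, the ordinary Koszulity of $\gamma$ is immediate from that of $\alpha$ and $\beta$, as the bigraded ring $\gamma$ inherits the diagonal concentration.

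The core step is to prove that the wedge of universally Koszul algebras is universally Koszul. Given an ideal $I \subseteq \gamma$ generated in degree $1$ by $V \subseteq \alpha_1 \oplus \beta_1$, let $I_\alpha \subseteq \alpha$ and $I_\beta \subseteq \beta$ be the ideals generated by the images of $V$ under the projections $\pi_\alpha$ and $\pi_\beta$. Because $\alpha_+ \cdot \beta_+ = 0$, one checks that $I$ coincides with $V$ in internal degree $1$ and with $(I_\alpha)_n \oplus (I_\beta)_n$ in every internal degree $n \geq 2$. Setting $D := (\pi_\alpha V \oplus \pi_\beta V)/V$ and noting that $\gamma_+ \cdot D = 0$ inside $\gamma/I$, one obtains a short exact sequence of graded $\gamma$-modules
\[
0 \to D \to \gamma/I \to (\alpha/I_\alpha) \sqcup (\beta/I_\beta) \to 0,
\]
in which $D$ sits entirely in internal degree $1$ and is annihilated by $\gamma_+$. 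Feeding this into the long exact sequence of $\ext^\bu_\gamma(\argu, \F)$ reduces the desired vanishing $\ext^{ij}_\gamma(\gamma/I, \F) = 0$ for $i \neq j$ to the linearity of the Ext groups of the two outer terms: the first follows from the Koszulity of $\gamma$ established above, and the second from the universally Koszul hypothesis on $\alpha$ and $\beta$.

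The principal obstacle is this transfer of linearity for the Ext groups of $(\alpha/I_\alpha) \sqcup (\beta/I_\beta)$ from the factor algebras to $\gamma$: a linear $\alpha$-resolution of $\alpha/I_\alpha$ does not a priori remain a resolution after restriction of scalars along $\gamma \twoheadrightarrow \alpha$. The resolution of the discrepancy requires a change-of-rings spectral sequence associated with $\gamma \twoheadrightarrow \alpha$ (and symmetrically with $\gamma \twoheadrightarrow \beta$) to degenerate in the right bidegrees, and this degeneration again rests on the wedge identity $\alpha_+ \cdot \beta_+ = 0$, which makes the relevant higher $\tor$ groups vanish. This is the mechanism by which the local Koszulity of the two factors transfers to the free product.
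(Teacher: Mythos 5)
Your overall strategy is exactly the paper's: apply Theorem A in both directions, identify $H^\bu(\mc A\amalg\mc B,\F)$ with the degreewise direct sum $A\sqcap B$ of the two cohomology rings (Lemma \ref{cohom free prod}; note this wedge is the \emph{product}, not the coproduct, in the category of connected graded algebras), and then invoke closure of universal Koszulity under $\sqcap$. The paper disposes of that last step by citing Lemma \ref{lem:pol ring}, i.e.\ Proposition 30 of \cite{enhanced}, whereas you attempt to prove it directly, and that attempt has genuine gaps.

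Two concrete problems. First, in your exact sequence $0\to D\to \gamma/I\to Q\to 0$ with $Q=(\alpha/I_\alpha)\sqcap(\beta/I_\beta)$, the module $D$ is a direct sum of copies of $\F[-1]$, so $\ext^{ij}_\gamma(D,\F)$ is concentrated on the line $j=i+1$, not on the diagonal. The long exact sequence therefore only gives $\ext^{i,i+1}_\gamma(\gamma/I,\F)\simeq\ker\bigl(\ext^{i,i+1}_\gamma(D,\F)\to\ext^{i+1,i+1}_\gamma(Q,\F)\bigr)$, and you still must show that this connecting homomorphism (Yoneda product with the extension class) is injective for every $i$; this is not a formal consequence of the linearity of the two outer terms, and it genuinely fails for a general such extension (e.g.\ the split one, $Q\oplus\F[-1]$, is not a Koszul module of degree $0$). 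Second, the change-of-rings step you flag is not actually resolved: $\alpha=\gamma/(\beta_+)$ is neither free nor flat over $\gamma$, and $\ext^{q}_\gamma(\alpha,\F)$ does not vanish for $q>0$, so the relevant spectral sequence has many nonzero rows and does not degenerate for the reason you give; in fact, proving that $Q$ is a Koszul $\gamma$-module is essentially the statement you are trying to prove, in the special case $V=\pi_\alpha V\oplus\pi_\beta V$. To repair the argument you should either cite Lemma \ref{lem:pol ring} as the paper does, or run Conca's colon-ideal criterion for universal Koszulity, which for the wedge reduces to an explicit computation of $(I:\ell)$ using $\alpha_+\cdot\beta_+=0$.
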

	From which follows:
	\begin{thm*}[Kurosh’ subalgebra theorem]
		Let $\mc A$ and $ \mc B$ be two Bloch-Kato Lie algebras, and let $\li=\mc A\amalg\mc B$ be their free product. Let $\mc H\leq \li$ be a Lie subalgebra generated by elements of degree $1$. Then $\mc H$ is a free product of a free Lie algebra and of some subalgebras of $\mc A$ and $\mc B$. More precisely, \[\mc H=\mc F\amalg\gen{\mc H_1\cap \mc A}\amalg\gen{\mc H_1\cap\mc B}\] where $\mc F$ is the free Lie algebra generated by any complement $W\leq \mc A_1\oplus\mc B_1$ with $W\oplus (\mc H_1\cap \mc A)\oplus(\mc H_1\cap\mc B)=\li_1$.
	\end{thm*}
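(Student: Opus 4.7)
\emph{Approach.} The plan is to derive the theorem from Theorem~B via a direct Hilbert-series comparison. Koszulity is used only to reduce the injectivity of a natural surjection to a numerical identity.

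\emph{Koszulity of both sides.} Set $V_A := \mc H_1 \cap \mc A$ and $V_B := \mc H_1 \cap \mc B$. By Theorem~B, $\li = \mc A \amalg \mc B$ is Bloch-Kato, so its degree-$1$-generated subalgebra $\mc H$ is Koszul. The subalgebras $\langle V_A\rangle \leq \mc A$ and $\langle V_B\rangle \leq \mc B$ are themselves Bloch-Kato: any of their degree-$1$-generated subalgebras is already a degree-$1$-generated subalgebra of $\mc A$ or $\mc B$. Free Lie algebras are trivially Bloch-Kato. A second application of Theorem~B then yields that $\mc R := \mc F \amalg \langle V_A\rangle \amalg \langle V_B\rangle$ is Bloch-Kato, and in particular Koszul.

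\emph{Comparison map.} The universal property of the free product, applied to the obvious inclusions $\mc F, \langle V_A\rangle, \langle V_B\rangle \hookrightarrow \mc H$, yields a graded Lie-algebra morphism $\varphi\colon \mc R \to \mc H$ which is the identity on degree~$1$. In particular $\varphi$ is surjective. Since both $\mc R$ and $\mc H$ are Koszul (hence quadratic), $\varphi$ is an isomorphism as soon as the Hilbert series of $U(\mc R)$ and $U(\mc H)$ coincide.

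\emph{Hilbert series.} Passing to universal enveloping algebras sends free products of Lie algebras to coproducts of associative algebras, so the standard coproduct formula for connected graded algebras yields
\[
\frac{1}{H_{U(\mc R)}(t)} \;=\; \bigl(1 - (\dim_\F W)\,t\bigr) + \frac{1}{H_{U(\langle V_A\rangle)}(t)} + \frac{1}{H_{U(\langle V_B\rangle)}(t)} - 2.
\]
For $\mc H$ I would use Koszul duality $H_{U(\mc L)}(t)\,P_{\mc L}(-t)=1$ together with Theorem~A: the cohomology ring $A := H^\bu(\li,\F)$ is universally Koszul, so the ideal of $A$ generated in degree $1$ by the annihilator of $\mc H_1 \leq \li_1$ has a linear free $A$-resolution. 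Identifying the associated quotient of $A$ with $H^\bu(\mc H,\F)$ and using the free-product decomposition of $A$ coming from $\li = \mc A \amalg \mc B$, I would read off $P_{\mc H}(t)$ and verify the matching with $H_{U(\mc R)}(t)$ term by term.

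\emph{Main obstacle.} The delicate step is precisely the identification of $H^\bu(\mc H,\F)$ with the universally-Koszul quotient of $A$ cut out by $\mc H_1$, and the translation of its linear $A$-resolution into the Hilbert-series identity compatible with the free-product decomposition. This is the point at which the Bloch-Kato hypothesis on $\mc A$ and $\mc B$ is essential, and it is where the analogue of Kurosh for Lie algebras fails in general, as in Shirshov's counterexample.
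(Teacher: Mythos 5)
Your route is genuinely different from the paper's, and it is viable. The paper proves the theorem by exhibiting $\mc U(\mc H)$ as the kernel of the Mayer--Vietoris map $\varepsilon^{\mc F}-\varepsilon^{\mc P}$ of Theorem \ref{thm:mayervietoris}(ii), with $\mc P=\gen{\mc H_1\cap\mc A}\amalg\gen{\mc H_1\cap\mc B}$; this requires first proving separately that $\gen{W}$ is free (Theorem \ref{subfree}) and that $\gen{\mc H_1\cap\mc A,\mc H_1\cap\mc B}$ splits as a free product (Proposition \ref{subfreeproduct}), and then an explicit degree-$2$ injectivity computation that Corollary \ref{inj koszul} propagates to all degrees. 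You instead build the candidate $\mc R=\mc F\amalg\gen{V_A}\amalg\gen{V_B}$ abstractly, surject it onto $\mc H$, and compare Hilbert series; this subsumes Theorem \ref{subfree} and Proposition \ref{subfreeproduct} entirely and replaces the injectivity argument by Koszul-duality bookkeeping. (One small correction: Koszulity is not what makes a dimension-preserving graded surjection injective --- that is linear algebra plus PBW --- it is what lets you compute the two Hilbert series at all.)

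The step you flag as the main obstacle is not actually an obstacle: the identification $H^\bu(\mc H,\F)\simeq A/AI_1$ with $I_1=\mc H_1^\perp$, together with the linearity of the resolution of $A/AI_1$, is exactly part (1) of the paper's proof of Theorem A (Theorem \ref{bk uk}) applied to $\li$, which is Bloch--Kato by Theorem \ref{et=>bk}. What genuinely remains is the Hilbert-series identity, and it does close. Write $A=A'\sqcap B'$ with $A'=H^\bu(\mc A,\F)$, $B'=H^\bu(\mc B,\F)$ (Lemma \ref{cohom free prod}), and set $I_A=V_A^\perp\leq A'_1$, $I_B=V_B^\perp\leq B'_1$. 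Since $W$ is a complement of $V_A\oplus V_B$ in $\mc H_1$, one has $W\cap\mc A_1=W\cap\mc B_1=0$ and $\pi_{\mc A}(W)\cap V_A=\pi_{\mc B}(W)\cap V_B=0$ (the paper's ``distinguished basis'' condition), whence the projections of $I_1$ to $A'_1$ and $B'_1$ are exactly $I_A$ and $I_B$. As multiplication by positive-degree elements in $A'\sqcap B'$ is componentwise, $(AI_1)_n=(A'I_A)_n\oplus(B'I_B)_n$ for $n\geq 2$, while in degree $1$ the dimensions differ by exactly $\dim_\F W$. Therefore
\[
H_{A/AI_1}(t)=H_{A'/A'I_A}(t)+H_{B'/B'I_B}(t)-1+(\dim_\F W)\,t,
\]
and substituting $-t$ and applying the duality $H_{\mc U(\mc M)}(t)\,P_{\mc M}(-t)=1$ to each Koszul factor reproduces precisely your coproduct formula for $1/H_{\mc U(\mc R)}(t)$. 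You should make this computation explicit in the write-up: the equality of the degree-one projections of $I_1$ with $I_A$ and $I_B$ is the one place where the hypothesis that $W$ is a genuine complement (rather than an arbitrary subspace) enters, and without it the count fails.
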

	In fact, our original purpose was to prove Theorem B by applying some kind of Kurosh subalgebra theorem, just like in the group case, for which it is a standard argument. However, there is no such a result in the general Lie algebra case, as it was proved by Shirshov in \cite{shir}, who showed the existence of Lie algebras such that their free product contain a Lie algebra that is not free and is not isomorphic to the free product of any subalgebra of the factors and a free Lie algebra.  Eventually, we managed to prove Theorem B in a different way, and then we realized this could be used to prove the above Kurosh theorem for the distinguished class of Bloch-Kato Lie algebras.
	
	Eventually, notice that the example provided by Shirshov is not $\mathbb N$-graded, so that the main result of this paper sets in a different point of view.
\begin{acknowledgement}
	The author would like to thank his supervisors Conchita Mart\'inez-P\'erez, from the University of Zaragoza, Spain, and Thomas Weigel, from the University of Milano-Bicocca, Italy, for very helpful discussions concerning the (co)homolgical arguments used in the present paper. He would also express his gratitude towards the two institutions which the author belongs to, namely the University of Milano-Bicocca and the University of Zaragoza.
\end{acknowledgement}
	\section{Homological algebra}
	In this section we will introduce some tools from homological algebra in the graded context. For a complete treatment of the ungraded homological algebra, consult Weibel's book 
	By an $\mathbb N_0$-graded (associative) $\F$-algebra $A$ we mean a graded vector space $\bigoplus_{i\geq 0}A_i$ that is locally finite dimensional, i.e., $\dim_\F A_i<\infty$, and that is endowed with a graded $\F$-bilinear and associative product $A\otimes A\to A$ that we will denote by $(a,a')\mapsto aa'$. 
	All the algebras are assumed to be unital and connected, i.e., $A_0\simeq \F$. Denote by $A_+$ the augmentation ideal $A_+=\bigoplus_{i>0}A_i$, that is the kernel of the augmentation map $\varepsilon:A\to \F$. 
	The augmentation is graded and it induces a structure of $A$-module on $\F$, that is called the \textit{trivial module}. 
	A graded (left) $A$-module is a graded vector space $M=\oplus_i M_i$ that is also an $A$-module in the usual sense, for which the $A$-action $A\otimes M\to M$ is a graded $\F$-linear mapping.
	
	Henceforth, all the algebras (resp. modules) will be implicitely assumed to be $\mathbb N_0$-graded and connected (resp. $\mathbb N_0$-graded modules).
	
	If $V$ is a graded $\F$-vector space and $n\in \mathbb Z$, we denote by $V[n]$ the $n$-shifted vector space, i.e., $(V[n])_i=V_{i+n}$.
	\subsection{Cohomology of graded algebras}
	It is easy to see that the category of locally finite $A$-modules has enough projectives, for any free module is projective. 
	
	If $M$ and $N$ are two $A$-modules, we denote by $\ext^{\bu j}_A(M,N)$ the right derived functor of \[\hom^j_A(M,N)=\set{f:M\to N\text{ morphism of $A$-modules}}{f(M_k)\subseteq N_{k-j}}\]
	An element in $\hom_A^j(M,N)$ is called a morphism of degree $-j$. 
	
	It is straightforward to check that \begin{equation}\label{shiftExt}
		\ext^{ij}_A(M[m],N[n])=\ext^{i,j+m-n}_A(M,N)
	\end{equation} for graded $A$-modules $M$ and $N$ and integers $m,n$.
	
	The $\ext$-groups can be computed as follows: Let $P_\bu\to M$ be a projective $A$-resolution and apply the functor $\hom_A^j(\argu,N)$ to get the cochain complex \[\dots\to\hom^j_A(P_{i-1},N)\to\hom^j_A(P_{i},N)\to\hom^j_A(P_{i+1},N)\to\dots.\]
	Then \[\ext^{ij}_A(M,N)\simeq H^i(\hom^j_A(P_\bu,N))= \frac{\ker(\hom^j_A(P_{i},N)\to\hom^j_A(P_{i+1},N))}{\image({\hom^j_A(P_{i-1},N)\to\hom^j_A(P_{i},N)})}\]
	There is a distinguished projective resolution that one can always use, that is the so-called \textit{Bar complex} \[\dots\to A\otimes A_+^{\otimes n}\otimes M\to \dots \to A\otimes A_+\otimes A_+\otimes M\to A\otimes A_+\otimes M\to A\otimes M\to 0\]
	with differential given by \[\del (a_0\otimes\dots \otimes a_i\otimes m)=\sum_{s=1}^i (-1)^sa_0\otimes\dots\otimes  a_{s-1}a_s\otimes \dots\otimes a_i\otimes m+(-1)^{i+1}a_0\otimes \dots\otimes a_im.\]

	For an $\F$-algebra $A$ we call $H^\bu(A):=\ext^\bu_A(\F,\F)$ the \textit{cohomology} of $A$. 
	
	The following results give a description of the low degree $\ext$-groups (see \cite{pp}).
	\begin{prop}\label{prop:gen rel}
		Let $A$ be an $\F$-algebra, and $M$ an $A$-module. Then the following hold:
		\begin{enumerate}
			\item $\ext^{ij}_A(M,\F)=0$ for all $i>j$.
			\item There exist graded vector spaces $X$ and $Y$ such that $M$ is presented as \[A\otimes Y\to A\otimes X\to M\to 0\]
			and \begin{align*}
				X_j^\ast\simeq \ext^{0,j}_A(M,\F)\\
				Y_j^\ast\simeq \ext^{1,j}_A(M,\F)
			\end{align*}
		\item There exist graded vector spaces $V$ and $W$ such that $A$ is presented as an algebra as \[\ten(V)\otimes W\otimes \ten(V)\to \ten(V)\to A\to 0\]
		and \begin{align*}
			V_j^\ast\simeq \ext^{1,j}_A(\F,\F)\\
			W_j^\ast\simeq \ext^{2,j}_A(\F,\F)
		\end{align*}
		\end{enumerate} 
	\end{prop}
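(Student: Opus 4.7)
The plan is to build minimal free resolutions and to exploit the fact that, for a connected $\mathbb N_0$-graded algebra $A$, the differentials in any minimal resolution land inside the augmentation ideal $A_+$; after applying $\hom^j_A(\argu,\F)$ those differentials become identically zero, so that the Ext groups are read off directly from the graded ranks of the free modules in the resolution.

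For (1), I would construct a minimal free resolution $P_\bu\to M$, with $P_i=A\otimes V_i$ and $d(P_i)\subseteq A_+\cdot P_{i-1}$, and show by induction on $i$ that $V_i$ is concentrated in internal degrees $\geq i$: by minimality $V_i$ can be taken inside $A_+\otimes V_{i-1}$, so its lowest degree grows by at least one at each step. Consequently $\ext^{ij}_A(M,\F)\simeq (V_{i,j})^\ast$ vanishes whenever $j<i$.

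For (2), I would let $X$ be a graded complement of $A_+M$ in $M$, so that the multiplication map $A\otimes X\to M$ is surjective. Using the decomposition $M=X\oplus A_+M$ one checks that the kernel $K$ lies inside $A_+\otimes X$. Repeating the procedure, I would take $Y$ a graded complement of $A_+K$ in $K$; then $A\otimes Y\twoheadrightarrow K$, so $A\otimes Y\to A\otimes X\to M\to 0$ is exact. The identification $X_j^\ast\simeq \ext^{0,j}_A(M,\F)$ is immediate, since any $A$-morphism $M\to\F$ factors through $M/A_+M\simeq X$. For $Y$, I would extend this partial presentation to a full minimal resolution as in (1) and use that all differentials land in $A_+$-multiples, so that $\hom^j_A(P_\bu,\F)$ has zero differentials and $Y_j^\ast\simeq \ext^{1,j}_A(M,\F)$.

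For (3), the plan is to specialize (2) to $M=\F$: since $A_+\cdot\F=0$ one has $X\simeq\F$ and $Y\simeq A_+/A_+^2$, and any graded section $V\hookrightarrow A_+$ of $A_+\twoheadrightarrow A_+/A_+^2$ lifts to a surjection of algebras $\ten(V)\twoheadrightarrow A$ whose kernel $R$ is a two-sided ideal. Taking $W\subseteq R$ to be a graded complement of $\ten(V)_+\cdot R+R\cdot\ten(V)_+$ in $R$ yields the desired algebra presentation, and $V_j^\ast\simeq \ext^{1,j}_A(\F,\F)$ is just (2) applied to $\F$. The main obstacle is the identification $W_j^\ast\simeq \ext^{2,j}_A(\F,\F)$, which requires passing from bimodule data (the minimal generators of the two-sided ideal $R$) to left-module data (the second term of a minimal left-$A$-resolution of $\F$). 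I would handle this by comparing a chain-level lift of the algebra presentation $\ten(V)\twoheadrightarrow A$ with the start of a minimal left-module resolution of $\F$, and using a Nakayama-type argument based on the identification $R/(\ten(V)_+R+R\ten(V)_+)\simeq W$ to show that the second syzygy of $\F$ is minimally generated, as a left $A$-module, by the image of $W$.
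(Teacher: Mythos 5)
The paper offers no proof of this proposition: it is quoted from Polishchuk--Positselski \cite{pp}, so there is no in-text argument to compare against. Your minimal-free-resolution strategy is the standard proof (essentially the one in that reference) and parts (1) and (2) are complete as sketched: over a connected $\mathbb N_0$-graded algebra every such module admits a minimal free resolution because graded vector-space complements of $A_+M$ in $M$ always exist; minimality forces $d(P_i)\subseteq A_+P_{i-1}$, so $\hom^j_A(P_\bu,\F)$ has vanishing differentials and $\ext^{ij}_A(M,\F)\simeq (V_{i,j})^\ast$; your induction $\min\deg V_i\geq i$ then gives (1), and (2) is the $i=0,1$ portion of the same computation, with $\hom_A^j(M,\F)=\hom^j(M/A_+M,\F)=X_j^\ast$ exactly as you say.

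The one genuine piece of work is the step you yourself flag in (3): passing from $W\simeq R/(\ten(V)_+R+R\ten(V)_+)$, the minimal \emph{two-sided} generators of $R=\ker(\pi:\ten(V)\to A)$, to the minimal \emph{left}-module generators of the second syzygy $\Omega^2=\ker(A\otimes V\to A_+)$. Your plan is the right one, but as written it is only a plan; to close it, use the canonical decomposition $\ten(V)_+=\ten(V)\otimes V$ to write $r=\sum t_i\otimes v_i$ and define $\partial(r)=\sum\pi(t_i)\otimes v_i\in A\otimes V$. This lands in $\Omega^2$ because $\sum\pi(t_i)\pi(v_i)=\pi(r)=0$; one checks directly that $\partial(R\ten(V)_+)=0$ and $\partial(\ten(V)_+R)\subseteq A_+\Omega^2$, so $\partial$ induces a map $R/(\ten(V)_+R+R\ten(V)_+)\to\Omega^2/A_+\Omega^2$, and the routine verification that this is an isomorphism of graded vector spaces, combined with the minimal-resolution computation $\ext^{2,j}_A(\F,\F)\simeq(\Omega^2/A_+\Omega^2)_j^\ast$, is precisely the Nakayama-type comparison you describe. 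With that lemma supplied your argument is correct and complete.
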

\begin{cor}\label{cor:gen module}
	Let $M$ be an $A$-module. If $\ext^{0,j}_A(M,\F)=0$, then $M$ has no relation of degree $j$, i.e., if $m_j\in M_j$ then there are elements $a_k\in A_k$ and $m_k\in M_k$ such that \[m_j=\sum_{i<j}a_im_{j-i}.\]
\end{cor}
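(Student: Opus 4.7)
The plan is to interpret $\ext^{0,j}_A(M,\F)$ directly as the graded dual of a minimal generating space for $M$, and then read off what the vanishing hypothesis forces. Since $\F$ is the trivial $A$-module, any graded $A$-homomorphism $M \to \F$ must vanish on $A_+ M$ and thus factors through $M/A_+M$; conversely, any $\F$-linear functional on $(M/A_+M)_j$ extends uniquely to a degree-$(-j)$ homomorphism $M \to \F$. Consequently
\[ \ext^{0,j}_A(M,\F) \;=\; \hom^j_A(M,\F) \;\simeq\; (M/A_+M)_j^\ast, \]
which is also consistent with the identification $X_j^\ast \simeq \ext^{0,j}_A(M,\F)$ in Proposition \ref{prop:gen rel}(2), with $X = M/A_+M$ playing the role of a minimal homogeneous generating space.

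Given this, the assumption $\ext^{0,j}_A(M,\F)=0$ translates to $(M/A_+M)_j = 0$, i.e.\ $M_j \subseteq (A_+M)_j$. Unpacking the right-hand side in internal degree $j$,
\[ (A_+ M)_j \;=\; \sum_{i \geq 1} A_i \cdot M_{j-i}, \]
so every $m_j \in M_j$ can be written as $m_j = \sum_{i \geq 1} a_i m_{j-i}$ with $a_i \in A_i$ and $m_{j-i} \in M_{j-i}$. Since the terms with $j - i < 0$ are vacuous and the sum effectively ranges over $1 \leq i \leq j$, this is exactly the expression stated in the corollary (with the bound $i < j$ understood in the sense that each $m_{j-i}$ sits in a strictly smaller internal degree than $m_j$).

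The argument is essentially formal once Proposition \ref{prop:gen rel}(2) is available, and I do not foresee any genuine obstacle. The one point worth emphasizing is that the identification $\ext^{0,j}_A(M,\F) \simeq (M/A_+M)_j^\ast$ depends on $X$ in the proposition being a \emph{minimal} generating space; this minimality is precisely what converts the cohomological vanishing into the concrete statement that no new module generator is needed in degree $j$, which is the content of the corollary.
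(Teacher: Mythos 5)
Your argument is correct and matches the paper's (implicit) reasoning: the corollary is stated as an immediate consequence of Proposition \ref{prop:gen rel}(2), via exactly the identification $\ext^{0,j}_A(M,\F)=\hom^j_A(M,\F)\simeq (M/A_+M)_j^\ast$ that you spell out. The only cosmetic remark is that this identification is canonical (the zeroth $\ext$ is just $\hom$, and any degree-$(-j)$ map to the trivial module kills $A_+M$), so it does not actually depend on choosing $X$ minimal.
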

	The vector space $H^\bu(A,\F)=\bigoplus_{i,j}H^{i,j}(A,\F)$ can also be endowed with the so-called \textbf{cup-product} $\smile$ that makes it into a bigraded (or $\mathbb N_0\times \mathbb N_0-$) algebra, i.e., \[\smile:H^{ij}(A,\F)\otimes H^{pq}(A,\F)\to H^{i+p,j+q}(A,\F).\]

	The following is the bigraded version of the classical cohomological long exact sequence theorem, and it can be deduced from the general long-exact sequence  theorem for (co)chain complexes (cfr. \cite{weib}).
	\begin{thm}\label{thm:longext}
		Let $A$ be an algebra. If $0\to L\to M\to N\to 0$ is an exact sequence of $A$-modules, then for all $j$ there is an induced long exact sequence \begin{align*}0\to& \ext^{0,j}_A(N,\F)\to \ext^{0,j}_A(M,\F)\to \ext^{0,j}_A(L,\F)\to\ext^{1,j}_A(N,\F)\to\dots\\
			\dots\to& \ext^{i,j}_A(N,\F)\to \ext^{i,j}_A(M,\F)\to \ext^{i,j}_A(L,\F)\to\ext^{i+1,j}_A(N,\F)\to\dots
		\end{align*}
	\end{thm}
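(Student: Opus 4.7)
The plan is to reduce the claim to the standard long exact cohomology sequence associated with a short exact sequence of cochain complexes, by producing a compatible trio of graded projective resolutions. First I would choose graded projective resolutions $P_\bu\to L$ and $R_\bu\to N$; their existence in the graded category is guaranteed, for instance, by the Bar complex recalled above, which is functorial and built out of free graded $A$-modules.

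Next I would invoke the graded version of the Horseshoe Lemma to build a graded projective resolution $Q_\bu\to M$ with $Q_n=P_n\oplus R_n$, fitting into a termwise split short exact sequence of complexes $0\to P_\bu\to Q_\bu\to R_\bu\to 0$. The standard diagrammatic construction transcribes verbatim from the ungraded case, provided one observes that every lift required in the induction can be taken of internal degree $0$; this is possible precisely because the original sequence $0\to L\to M\to N\to 0$ and both chosen resolutions live in the category of graded $A$-modules with degree-$0$ morphisms.

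Applying the contravariant additive functor $\hom_A^j(\argu,\F)$ preserves the termwise splitting, so it yields a short exact sequence of cochain complexes
\[
0\to\hom_A^j(R_\bu,\F)\to\hom_A^j(Q_\bu,\F)\to\hom_A^j(P_\bu,\F)\to 0.
\]
The classical long exact cohomology sequence (cf.\ \cite{weib}) then produces a long exact sequence in cohomology, and the isomorphism $\ext^{i,j}_A(\argu,\F)\simeq H^i(\hom_A^j(P_\bu,\F))$ recalled earlier in the excerpt identifies its terms with those in the statement.

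The only point deserving attention, and which I would flag as the main obstacle in a fully rigorous write-up, is the careful bookkeeping of the internal degree $j$ throughout: the differentials of the resolutions, the horseshoe splicings, and the connecting homomorphism produced by the snake lemma must all be of internal degree $0$. Once one commits to working in the category of graded $A$-modules with morphisms of a fixed internal degree, every step becomes mechanical and reduces to the familiar ungraded argument.
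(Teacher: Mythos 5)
Your proposal is correct and follows exactly the route the paper itself indicates: the paper gives no written proof, merely noting that the statement ``can be deduced from the general long-exact sequence theorem for (co)chain complexes,'' which is precisely the graded Horseshoe-Lemma-plus-snake-lemma argument you spell out. Your added remark that all lifts and connecting maps can be taken of internal degree $0$ is the right (and only) point of care in transcribing the ungraded argument.
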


	Also a version of the Eckmann-Shapiro Lemma holds for the internal degree of cohomology. In order to state such a result it is necessary to endow a tensor product with a grading. Let $A$ and $B$ be two $\mathbb N_0$-graded algebras, $M$ a graded $(A,B)$-bimodule and $N$ a graded left $B$-module. For $M$ being a graded $(A,B)$-bimodule means that $M=\bigoplus M_i$ is a graded vector space and both the action of $A$ and $B$ preserve the grading, i.e., $A_jM_i+M_iB_j\subseteq M_{i+j}$.
	 Then, the tensor product $M\otimes_B N$ is an $A$-module whose grading is given by \[(M\otimes_B N)_n=\left(\bigoplus_{i+j=n}M_i\otimes_\F N_j\right)\left/S_n\right.\] where $S_n$ is the vector subspace generated by elements of the form $xb\otimes y-x\otimes by$ for $x\in M_i$, $y\in N_j$ and $b\in B_{n-i-j}$.  
	\begin{thm}[Eckmann-Shapiro Lemma]\label{shap}
		Let $A$ be an algebra and let $B$ be a subalgebra of $A$. Suppose that $A$ is projective as a right $B$-module. Let $M$ be a left $B$-module and let $N$ be a left $A$-module.
		Then, \[\ext^{ij}_A(A\otimes _BM,N)\simeq \ext^{ij}_B(M,N).\]
		\begin{proof}
			Let $P_\bu\to M$ be a projective resolution of $M$ over $B$. Since $A$ is a projective right $B$-module, the sequence \[A\otimes_B P_\bu\to A\otimes_B M\]is a projective resolution over $A$. 
			
			Now, $\hom_A^j(A\otimes_B P_i,N)\simeq \hom^j_B(P_i,N)$, and hence $H^i(\hom_A^j(A\otimes_B P_\bu,N))\simeq H^i(\hom_B^j(M,N))$.
		\end{proof}
	\end{thm}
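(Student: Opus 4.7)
The plan is to construct a single cochain complex whose cohomology simultaneously computes both sides, using the classical adjunction between extension of scalars and restriction, carefully tracked in the graded setting.

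First, I would take a projective resolution $P_\bu\to M$ in the category of graded left $B$-modules, which exists by the enough-projectives discussion at the start of the section. Then I would form $A\otimes_B P_\bu$ and verify two things: (i) each $A\otimes_B P_i$ is a projective graded left $A$-module, and (ii) $A\otimes_B P_\bu\to A\otimes_B M$ is exact, so the result is a projective $A$-resolution of $A\otimes_B M$. For (i), note that $A\otimes_B B[n]\simeq A[n]$ as graded $A$-modules, so extension of scalars sends graded free $B$-modules to graded free $A$-modules and preserves the property of being a direct summand of a free module; for (ii), projectivity of $A$ as a right $B$-module implies flatness, hence exactness of $A\otimes_B(\argu)$.

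Next, I would establish the graded tensor-hom adjunction
\[\hom_A^j(A\otimes_B P, N)\simeq \hom_B^j(P, N),\]
natural in the graded $B$-module $P$ and the graded $A$-module $N$, by sending $f$ to $p\mapsto f(1\otimes p)$ with inverse $g\mapsto(a\otimes p\mapsto a\,g(p))$. One checks these maps are well-defined modulo the relations $xb\otimes y-x\otimes by$ defining the graded tensor product, and that they preserve the internal degree $-j$. Applying the adjunction termwise to $P_\bu$ yields an isomorphism of cochain complexes $\hom_A^j(A\otimes_B P_\bu,N)\simeq\hom_B^j(P_\bu,N)$, and passing to cohomology delivers the claimed isomorphism of $\ext$-groups.

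The main obstacle, I expect, is not conceptual but rather the bookkeeping with the internal grading: one must confirm that the graded tensor product defined modulo $S_n$ interacts correctly with free modules (so that $A\otimes_B B[n]\simeq A[n]$), that projectivity is preserved under extension of scalars in the graded sense, and that the Hom-adjunction is compatible with the decomposition by internal degree. Once these three compatibilities are spelled out, the argument collapses to the familiar ungraded Eckmann-Shapiro proof.
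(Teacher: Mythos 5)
Your proposal is correct and follows essentially the same route as the paper: tensor a projective $B$-resolution of $M$ up to $A$ (using projectivity, hence flatness, of $A$ as a right $B$-module to get a projective $A$-resolution of $A\otimes_B M$), apply the graded tensor-hom adjunction termwise, and pass to cohomology. You merely make explicit the bookkeeping with the internal grading and the adjunction maps that the paper leaves implicit.
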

\subsection{Cohomology of Lie algebras}
Let $\li$ be an $\mathbb N$-graded $\F$-Lie algebra defined as in the Introduction. 
Its universal envelope (cfr. \cite{bou}) $\ul$ can be presented as the quotient of the tensor algebra on the vector space $\li$ by the ideal $J$ generated by the elements $x\otimes y-y\otimes x-[x,y]$ for every homogeneous elements $x,y\in\li$. 
By definition, the ideal $J$ is graded, and hence $\ul$ inherits a grading from $\li$ that makes it into a $\mathbb N_0$-graded associative algebra.

If $M$ is a $\li$-module (i.e., a graded $\ul$-module) one can define the cohomology of $\li$ with coefficients in $M$ as \[H^\bu(\li,M):=H^\bu(\ul,M)=\ext^\bu_\ul(\F,M).\] It follows that $H^\bu(\li,M)$ is bigraded. In this case, the cup product makes $H^\bu(\li,\F)$ into a graded-commutative algebra, i.e., $ab=(-1)^{nm}ba$ for $a\in H^n(\li,\F)$ and $b\in H^m(\li,\F)$.

In order to compute the Lie algebra cohomology one can use the Chevalley-Eilenberg complex $\hom_\F(\exa(\li),M)$ (cf. \cite{weib}), namely, \[H^\bu(\li,M)=H^\bu(\hom_\F(\exa(\li),M)).\] With respect to such a description, the cup product on $H^\bu(\li,\F)$ is induced by the natural map $\Lambda^n (\li)\otimes \Lambda^m(\li)\to \Lambda^{n+m}(\li)$.

	\subsection{Koszul algebras and modules}
	For an $\F$-vector space $V$, denote by $\ten(V)$ the tensor algebra over $V$, namely the vector space $\bigoplus_{i\geq 0}V^{\otimes i}$ where the product is given by concatenation of tensors.
	
	Let $A$ be an $\F$-algebra that admits a presentation \[A=\frac{\ten(V)}{(W)}\]
	where $V$ is a graded vector space and $(W)=\ten(V)\otimes W\otimes \ten(V)$ is the ideal generated by some $W\leq\ten(V)$.
	
	If $V$ is concentrated in degree $1$, then we say that $A$ is $1$\textbf{-generated}\footnote{Usually, in literature, one says that a certain object is "$n$-generated" whenever it is generated by $n$ of its elements. In this paper we are adopting such an expression for a different meaning.}.
	If moreover $W\leq V\otimes V$, one calls $A$ a \textbf{quadratic algebra}. In the latter case, one writes $A=Q(V,W)$.
	
	Henceforth, all the algebras we are dealing with are finitely generated, e.g., for a $1$-generated algebra $A$, $\dim_\F A_1<\infty$.
	
	From Proposition \ref{prop:gen rel}, one can see that $A$ is a $1$-generated algebra if, and only if, $\ext^{1,j}_A(\F,\F)=0$ for $j>1$, and $A$ is quadratic if, and only if, $\ext^{i,j}_A(\F,\F)=0$ for $i<j$, $i=1,2$. Recall that always $\ext^{ij}_A(\F,\F)=0$ for $j<i$.
	
	By extending this cohomological property to higher degrees, one recovers the definiton of a \textbf{Koszul algebra}: $A$ is Koszul if \[\ext^{i,j}_A(\F,\F)=0\quad \text{for } i\neq j.\]
	In that case, one says that the cohomology of $A$ is concentrated on the diagonal.
	
	 Similarly, let $M$ be an $A$-module with a presentation \[M=\frac{A\otimes H}{K}\]with $K$ an $A$-submodule of $A\otimes H$. 
	
	If $H$ is concentrated in degree $n$, then we say that $M$ is an $n$-generated $A$-module. 
	If moreover $A$ is $1$-generated and $K\leq A_1\otimes H$, one calls $M$ a \textbf{quadratic module} generated in degree $n$, or, simply, of degree $n$. In the latter case, one writes $M=Q_A(H,K)$.
	
	If $A$ is a quadratic algebra, from Corollary \ref{cor:gen module}, one has that $M$ is $n$-generated if, and only if, $\ext^{0,j}_A(M,\F)=0$ for $j\neq n$, and moreover $M$ is quadratic if, and only if, $\ext^{i,j}_A(M,\F)=0$ for $i\neq j-n$, $i=0,1$.
	
	Again, by extending this cohomological property to higher degrees, one recovers the definiton of a \textbf{Koszul $A$-module}: $M$ is a Koszul $A$-module if there is some $n\geq 0$ such that \[\ext^{i,j}_A(M,\F)=0\quad \text{for } i+n\neq j.\]
	It follows from \ref{shiftExt} that an $n$-generated $A$-module $M$ is Koszul if, and only if, $\ext^{ij}_A(M[-n],\F)=0$ for $j\neq i$, and hence we may only study $0$-generated Koszul $A$-modules. 
	
	Such modules are precisely those which admit a linear free resolution over $A$, i.e., a resolution by free $A$-modules $P_\bu\twoheadrightarrow M$ such that every $P_i=A\cdot (P_i)_i$ is generated in its $i$th degree (\cite{pp}).
	
	\medspace
	
	If $A=Q(V,W)$ is a finitely generated quadratic algebra, there is a kind of dual algebra that one can attach to $A$; this is the \textbf{quadratic dual} of $A$, and it is defined as \[A^!=Q(V^\ast,W^\perp)\] where $W^\perp\leq V^\ast\otimes V^\ast$ is the orthogonal complement of $W$ in $(V\otimes V)^\ast\simeq V^\ast\otimes V^\ast$, i.e.,
	\[W^\perp=\set{\alpha\in V^\ast\otimes V^\ast}{\alpha\vert_W\equiv 0}\]
	Notice that $(A^!)^!\simeq A$ for any (finitely generated) quadratic algebra $A$.
	
	The motivation for defining such a dual algebra is the following result:
	\begin{thm}[cf. \cite{pp}]\label{thm:diagext=dual}
		Let $A$ be a quadratic algebra. Then \[A^!\simeq\bigoplus _{i\geq 0} \ext^{i,i}_A(\F\,\F).\]
		
		In particular, $A$ is Koszul if, and only if, $A^!\simeq H^\bu(A,\F)$.
		
		Moreover, $A$ and $A^!$ are Koszul simultaneously.
	\end{thm}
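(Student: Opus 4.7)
The plan is to construct a natural algebra homomorphism $\varphi\colon A^! \to \bigoplus_{i\geq 0} \ext^{i,i}_A(\F,\F)$, prove it is an isomorphism, and then read off both Koszul characterisations as formal consequences.

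To define $\varphi$, I would start from Proposition \ref{prop:gen rel}, which identifies $V^*\simeq \ext^{1,1}_A(\F,\F)$ and $W^*\simeq \ext^{2,2}_A(\F,\F)$. Because the cup product is bigraded, the first identification extends to an algebra map $\tilde\varphi\colon \ten(V^*)\to \ext^{\bu,\bu}_A(\F,\F)$ whose image lies in the diagonal part. To see that $\tilde\varphi$ factors through $A^! = \ten(V^*)/(W^\perp)$, I would verify that in degree two the induced morphism $V^*\otimes V^*\to \ext^{2,2}_A(\F,\F)\simeq W^*$ is nothing but the restriction map, whose kernel is $W^\perp$ by the very definition of orthogonal complement. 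The verification is a direct computation in the Bar complex, where the cup product corresponds to concatenation of tensors.

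The main obstacle is showing that the induced map $\varphi\colon A^!\to \bigoplus_i \ext^{i,i}_A(\F,\F)$ is a bidegree-preserving isomorphism. The standard approach I would follow is via the \emph{Koszul complex} $K_\bu = A\otimes (A^!)^*$, whose differential is built from the tautological element in $V\otimes V^*\subseteq A_1\otimes (A^!)_1^*$; the identity $\langle W,W^\perp\rangle = 0$ is precisely what forces $\del^2=0$, so quadraticity enters here in an essential way. One then checks that $K_\bu$ augments to $\F$ and that in each diagonal internal degree it computes $\tor^A_{i,i}(\F,\F)\simeq ((A^!)_i)^*$. Dualising yields $\ext^{i,i}_A(\F,\F)\simeq (A^!)_i$ compatibly with the algebra structure, and one verifies that the resulting map is the inverse of $\varphi$.

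Once the first part is secured, the Koszul characterisations follow formally. If $A$ is Koszul, then $\ext^{ij}_A(\F,\F)=0$ for $i\neq j$, so $H^\bu(A,\F)=\bigoplus_i \ext^{i,i}_A(\F,\F)\simeq A^!$ through $\varphi$; conversely, an isomorphism $H^\bu(A,\F)\simeq A^!$ forces the cohomology of $A$ to be concentrated on the diagonal, hence $A$ is Koszul. For the simultaneous Koszulity of $A$ and $A^!$, I would exploit that $K_\bu$ is acyclic in positive degrees if and only if $A$ is Koszul, and that under the involution $(A^!)^!\simeq A$ the same criterion transfers verbatim to the Koszul complex of $A^!$, yielding both Koszulity conditions at once.
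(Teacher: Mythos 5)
The paper does not actually prove this theorem; it is imported from Polishchuk--Positselski, so your sketch can only be judged against the standard argument there. Your construction of the algebra map $\varphi\colon A^!\to\bigoplus_i\ext^{i,i}_A(\F,\F)$ from Proposition \ref{prop:gen rel} and the degree-two factorisation through $W^\perp$ is fine. The genuine gap is in the step where you establish that $\varphi$ is an isomorphism: you propose to read off $\tor^A_{i,i}(\F,\F)\simeq((A^!)_i)^*$ from the Koszul complex $K_\bu=A\otimes(A^!_\bu)^*$, but $K_\bu$ computes $\tor^A_\bu(\F,\F)$ only when it is a resolution of $\F$, and its acyclicity is \emph{equivalent} to $A$ being Koszul. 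Since the first assertion of the theorem is claimed for an arbitrary quadratic algebra, this is circular: for a non-Koszul quadratic $A$ the homology of $K_\bu$ in homological degrees $\geq 3$ is not $\tor^A_\bu(\F,\F)$, and yet $\ext^{i,i}_A(\F,\F)\simeq A^!_i$ still holds. The correct route is the one you already used in degree $2$ and then abandoned: in the reduced bar complex, the internal-degree-$i$ part of $\F\otimes_A\mathrm{Bar}_{i+1}$ vanishes, that of $\F\otimes_A\mathrm{Bar}_i$ is $V^{\otimes i}$, and the differential into $\F\otimes_A\mathrm{Bar}_{i-1}$ sends it into the direct sum of the summands $V^{\otimes j}\otimes A_2\otimes V^{\otimes(i-2-j)}$, one for each position of the multiplication; hence
\[\tor^A_{i,i}(\F,\F)\simeq\bigcap_{j=0}^{i-2}V^{\otimes j}\otimes W\otimes V^{\otimes(i-2-j)},\]
whose linear dual is precisely $A^!_i=(V^*)^{\otimes i}\big/\sum_j(V^*)^{\otimes j}\otimes W^\perp\otimes(V^*)^{\otimes(i-2-j)}$. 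This works for any quadratic (indeed any $1$-generated) algebra and requires no exactness hypothesis.

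The second characterisation ($A$ Koszul iff $A^!\simeq H^\bu(A,\F)$ as bigraded algebras) does follow formally from the first part, as you say. For the final assertion, however, your claim that the acyclicity criterion ``transfers verbatim'' under $(A^!)^!\simeq A$ is not a proof: the Koszul complex of $A^!$ is $A^!\otimes A^*$, a different complex from $K_\bu(A)=A\otimes(A^!)^*$, and the exactness of one is not formally the exactness of the other. The standard bridge is the distributivity criterion: $A$ is Koszul if and only if, for every $n$, the subspaces $V^{\otimes j}\otimes W\otimes V^{\otimes(n-2-j)}$ generate a distributive lattice in $V^{\otimes n}$, and this property is preserved under passing to annihilators in $(V^*)^{\otimes n}$, which is exactly what identifies the condition for $A$ with the condition for $A^!$. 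Without some such self-dual reformulation, the ``simultaneously Koszul'' statement remains unproved in your sketch.
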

Also, one can define a similar object associated with a module $M$ over a graded algebra. 
Suppose $M$ is a $0$-generated quadratic $A$-module, say $M=Q_A(H,K)$, and $A$ is a quadratic algebra. Then, we define the quadratic dual of $M$ as the $A^!$-module \[M^{!_A}=Q_{A^!}(H^\ast,K^\perp).\]
There holds:
\begin{thm}[cf. \cite{pp}]
	Let $A$ be a quadratic algebra and $M$ a quadratic module of degree $0$. Then \[M^{!_A}\simeq\bigoplus_{i\geq 0}\ext^{i,i}_A(M,\F).\]
\end{thm}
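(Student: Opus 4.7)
The plan is to adapt the argument of Theorem \ref{thm:diagext=dual} to the module setting. Set $N := \bigoplus_{i \geq 0} \ext^{i,i}_A(M,\F)$. First endow $N$ with the structure of a graded left $A^!$-module: the Yoneda (cup) product yields bilinear maps
\[
\ext^{p,p}_A(\F,\F) \otimes \ext^{q,q}_A(M,\F) \longrightarrow \ext^{p+q,p+q}_A(M,\F),
\]
and identifying the first factor with $A^!_p$ via Theorem \ref{thm:diagext=dual} makes $N$ a graded left $A^!$-module. The goal is then to construct an $A^!$-linear isomorphism $\phi: M^{!_A} \to N$.

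The low-degree identification is a direct computation. Since $M = Q_A(H,K)$ is $0$-generated and quadratic, Proposition \ref{prop:gen rel} gives $N_0 = \ext^{0,0}_A(M,\F) \cong H^*$. Starting from the minimal presentation $A \otimes K \to A \otimes H \to M \to 0$ and extending one step further along a minimal free resolution, applying $\hom^1_A(\argu,\F)$ and computing cohomology identifies
\[
N_1 \;\cong\; K^* \;\cong\; (A^!_1 \otimes H^*)/K^\perp \;=\; (M^{!_A})_1,
\]
where the middle isomorphism is the canonical duality between $K \leq A_1 \otimes H$ and its orthogonal in $A^!_1 \otimes H^* \cong (A_1 \otimes H)^*$. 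This identification is compatible with the cup-product action of $A^!_1$ on $N_0$, so the universal property of $M^{!_A} = Q_{A^!}(H^*,K^\perp)$ extends the isomorphism $H^* \cong N_0$ uniquely to an $A^!$-linear map $\phi: M^{!_A} \to N$, bijective in degrees $0$ and $1$.

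To establish bijectivity in all degrees, introduce the \emph{relative Koszul complex} $K^\bullet(A,M) := A \otimes_\F \bigl((M^{!_A})_\bullet\bigr)^*$, with differential assembled from the canonical pairing $A_1 \otimes A^!_1 \to \F$ and the $A^!$-action on $M^{!_A}$. This is a complex of free graded $A$-modules augmented to $M$, and applying $\hom^\bullet_A(\argu,\F)$ recovers $M^{!_A}$ on the bidiagonal and produces a natural map from $M^{!_A}$ into the diagonal of $\ext^{\bullet,\bullet}_A(M,\F)$ that coincides with $\phi$. The main obstacle is the comparison between $K^\bullet(A,M)$ and a minimal free resolution of $M$: one must show that the induced map on cohomology is bijective in each bidegree, ensuring $\phi$ neither collapses nor misses any diagonal Ext class. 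This is the essence of the module-theoretic Koszul duality of \cite{pp} and, once the differentials on $K^\bullet(A,M)$ are written out, reduces to a diagrammatic check using the naturality of the cup product together with Theorem \ref{thm:longext} applied to the short exact sequence $0 \to \langle K\rangle \to A \otimes H \to M \to 0$.
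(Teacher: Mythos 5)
The paper offers no proof of this statement---it is imported from \cite{pp}---so your argument can only be assessed on its own terms, and there is a genuine gap at the decisive step. Your identification of $\phi$ in degrees $0$ and $1$ is correct, but bijectivity in higher degrees is exactly where the content lies, and the appeal to the complex $K^\bu(A,M)=A\otimes((M^{!_A})_\bu)^*$ does not deliver it. That complex is a resolution of $M$ precisely when $M$ is a Koszul module; for a general quadratic module it is merely a complex augmented to $M$, and if the comparison map to a minimal free resolution really induced an isomorphism on cohomology ``in each bidegree,'' you would have shown $\ext^{i,j}_A(M,\F)=0$ for $i\neq j$, i.e.\ that every quadratic module over a quadratic algebra is Koszul---which is false. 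The theorem controls only the diagonal, and proving that the diagonal is computed correctly is the whole point; calling it a ``diagrammatic check'' defers the entire difficulty. The same issue resurfaces as the unproved surjectivity of $\phi$: the universal property of $Q_{A^!}(H^\ast,K^\perp)$ produces a map out of $M^{!_A}$, but you still need to know that $\bigoplus_i\ext^{i,i}_A(M,\F)$ is generated in degree $0$ over the diagonal of $\ext^{\bu,\bu}_A(\F,\F)$, and that the kernel of the resulting surjection is no larger than the defining relations.

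The standard argument, and the one in \cite{pp}, sidesteps resolutions entirely and reads the diagonal off the bar resolution $\dots\to A\otimes A_+^{\otimes i}\otimes M\to\dots$. Applying $\hom^j_A(\argu,\F)$ and restricting to $j=i$, the only surviving term in cohomological degree $i$ is $(A_1^{\otimes i}\otimes M_0)^\ast$, since each tensor factor of $A_+$ must contribute degree exactly $1$ and $M$ must contribute degree $0$. The outgoing differential lands in the zero space, and the incoming differential has image $\sum_k (A_1^\ast)^{\otimes k}\otimes W^\perp\otimes (A_1^\ast)^{\otimes i-2-k}\otimes H^\ast+(A_1^\ast)^{\otimes i-1}\otimes K^\perp$, where $W$ is the relation space of $A$; here one uses that $(A_2)^\ast$ embeds in $(A_1\otimes A_1)^\ast$ with image $W^\perp$ and $(M_1)^\ast$ embeds in $(A_1\otimes M_0)^\ast$ with image $K^\perp$. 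The resulting quotient is by definition $(Q_{A^!}(H^\ast,K^\perp))_i=(M^{!_A})_i$, and the multiplicative structure is the cup product you describe. If you want to keep your outline, this bar-complex computation is the missing lemma that would make your last paragraph rigorous.
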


	We now introduce a Koszul property for the morphisms of graded algebras. We say that a morphism $f:A\to B$ is a \textbf{(left) Koszul homomorphism} if $B$ is a Koszul left $A$-module, or, equivalently, if the morphism can be completed into a linear free $A$-resolution of $B$
	\[\dots\to P_2\to P_1\to A\xrightarrow{f}B\to 0.\]
	The definition of right Koszul homomorphism is clear. Notice that, in particular, Koszul homomorphisms need to be surjective.
	\begin{prop}[Corollary I.5.4, p. 35, \cite{pp}]\label{koskos}
		Let $f:A\to B$ be a Koszul homomorphism. Then $A$ is a Koszul algebra if, and only if, $B$ is.
	\end{prop}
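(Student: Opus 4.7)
The plan is to prove both implications by translating resolutions across $f\colon A\to B$. Since $f$ is Koszul, by definition $B$ admits a linear free $A$-resolution $P_\bu\twoheadrightarrow B$, and I write $P_i=A\otimes V_i$ with $V_i$ concentrated in internal degree $i$. I would fix this resolution throughout.

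For the direction ``$B$ Koszul $\Rightarrow$ $A$ Koszul'', I would take a linear free $B$-resolution $Q_\bu\twoheadrightarrow\F$ with $Q_j=B\otimes W_j$ and $W_j$ in internal degree $j$. For each $j$, the complex $P_\bu\otimes W_j$ is a linear free $A$-resolution of $Q_j$ (regarded as an $A$-module via $f$), and the boundary maps of $Q_\bu$ lift to chain maps between these $A$-resolutions. Assembling into a double complex with $(p,q)$-entry $A\otimes V_p\otimes W_q$, the associated total complex is a free $A$-resolution of $\F$ whose term in homological degree $n$ is a direct sum of free $A$-modules generated in internal degree $n$. Hence it is linear, and $A$ is Koszul.

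For the harder direction ``$A$ Koszul $\Rightarrow$ $B$ Koszul'', I would invoke the Cartan--Eilenberg change-of-rings spectral sequence
\[
E_2^{p,q}=\ext^p_B(\F,\ext^q_A(B,\F))\Longrightarrow \ext^{p+q}_A(\F,\F),
\]
compatible with the internal grading. The hypothesis that $f$ is Koszul concentrates $\ext^{q}_A(B,\F)$ in internal degree $q$, while the Koszul hypothesis on $A$ forces the abutment $\ext^{p+q,j}_A(\F,\F)$ to vanish unless $j=p+q$. I would then show $\ext^{p,j}_B(\F,\F)=0$ for $j\neq p$ by induction on $p$. The case $j<p$ is standard vanishing, so only $j>p$ is at issue. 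For $r\geq 2$ the outgoing differentials from $E_r^{p,0}$ vanish for reasons of the second coordinate, and the incoming differentials come from $E_2^{p-r,r-1}(j)=\ext^{p-r,\,j-r+1}_B(\F,\F)\otimes \ext^{r-1,r-1}_A(B,\F)$. Since $j>p$ gives $j-r+1>p-r$, the inductive hypothesis kills the first tensor factor, so no nonzero differential touches $E_r^{p,0}(j)$. Therefore $\ext^{p,j}_B(\F,\F)=E_\infty^{p,0}(j)$ appears as a subquotient of the vanishing abutment, and is zero.

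The main obstacle will be setting up the Cartan--Eilenberg spectral sequence in the bigraded setting and checking that the internal grading is preserved on every page; this is essentially a refinement of a projective $A$-resolution of $\F$ through the $B$-module structure, a routine bigraded extension of the standard ungraded construction. Modulo that, both the bicomplex in the easy direction and the spectral sequence argument in the hard direction reduce to careful bookkeeping with the internal degree, with the key conceptual input being that ``$f$ Koszul'' means precisely that $\ext^{\bu}_A(B,\F)$ is concentrated on the diagonal.
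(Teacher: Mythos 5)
The paper offers no proof of this proposition at all: it is quoted from Polishchuk--Positselski (Corollary I.5.4), so the only thing to compare your argument against is a citation. Judged on its own terms, your spectral-sequence direction is sound. The change-of-rings spectral sequence $E_2^{p,q}=\ext^p_B(\F,\ext^q_A(B,\F))\Rightarrow\ext^{p+q}_A(\F,\F)$ does exist in the connected, locally finite, bigraded setting (it is the Grothendieck spectral sequence for the composite $\hom_A(\F,-)=\hom_B(\F,\hom_A(B,-))$, and $\hom_A(B,-)$ preserves injectives); a linear free resolution is automatically minimal, so $\ext^q_A(B,\F)$ is indeed a trivial $B$-module concentrated in internal degree $q$; and your induction on $p$ closes, since for $j>p$ the incoming entries $E_2^{p-r,r-1}(j)=\ext^{p-r,j-r+1}_B(\F,\F)\otimes\ext^{r-1,r-1}_A(B,\F)$ vanish by the inductive hypothesis because $j-r+1>p-r$, whence $\ext^{p,j}_B(\F,\F)=E_\infty^{p,0}(j)$ is a subquotient of the zero group $\ext^{p,j}_A(\F,\F)$.

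The gap is in the direction you treat as easy. Lifting the differentials of $Q_\bu$ to chain maps $P_\bu\otimes W_q\to P_\bu\otimes W_{q-1}$ produces horizontal maps whose composite is only null-homotopic, not zero, so what you assemble is not a double complex and its ``total complex'' need not be a complex at all. This is repairable in two ways: either pass to a semi-free resolution of the complex $Q_\bu$, adding higher components $A\otimes V_p\otimes W_q\to A\otimes V_{p+r-1}\otimes W_{q-r}$ to the total differential --- these do not disturb your degree count, since every summand in homological degree $n$ is still generated in internal degree $n$, so linearity survives --- or, more economically, reuse the very spectral sequence you set up for the other direction: if $B$ is Koszul, then $E_2^{p,q}(j)=\ext^{p,j-q}_B(\F,\F)\otimes\ext^{q,q}_A(B,\F)$ vanishes unless $p+q=j$, so every $d_r$ with $r\geq 2$ vanishes for degree reasons, the sequence degenerates, and the abutment $\ext^{n,j}_A(\F,\F)$ must vanish for $j\neq n$. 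With either repair the argument is complete, and it supplies an actual proof where the paper gives only a reference.
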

In fact, an algebra $A$ is Koszul if, and only if, the augmentation map $A\to \F$ is Koszul.
	\begin{lem}[Corollary I.5.9, p. 35, \cite{pp}]\label{lem:5.9}
		 Let $f:A\to B$ be a homomorphism of Koszul algebras. Then $f$ is left-Koszul if, and only if, $A^!$ is a free right $B^!$-module. If this is the case, then the dual morphism $B^!\to A^!$ is injective, and $B$ and $A^!/A^!B_+^!$ are dual Koszul modules over $A$ and $A^!$.
	\end{lem}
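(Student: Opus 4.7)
The plan is to recognize both conditions as equivalent manifestations of the Koszul duality between the left $A$-module $B$ and the right $B^!$-module $A^!$. Since $A$ and $B$ are Koszul, their cohomology rings coincide with $A^!$ and $B^!$ by Theorem \ref{thm:diagext=dual}, and the homomorphism $f$ induces on cohomology the quadratic dual morphism $f^\ast:B^!\to A^!$. Via $f^\ast$, $A^!$ becomes a right $B^!$-module through $a\cdot b:=a\,f^\ast(b)$. I will show that both conditions in the lemma are equivalent to the off-diagonal vanishing $\ext^{ij}_A(B,\F)=0$ for $i\neq j$, i.e., to $B$ being a Koszul $A$-module.

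For the $\Rightarrow$ direction, assume $f$ is left Koszul, so that $B$ is a Koszul $A$-module and $\ext^\bu_A(B,\F)\simeq B^{!_A}$ is concentrated on the diagonal. The key step is to produce a linear free $A$-resolution of $\F$ by splicing two Koszul complexes: start with the linear $A$-resolution of $B$ afforded by the Koszul hypothesis on $f$, and compose it with the Koszul resolution of $\F$ over $B$, which has the form $B\otimes(B^!)^\ast$ with the Koszul differential. The resulting total complex has underlying bigraded vector space $A\otimes(B^{!_A})^\ast\otimes(B^!)^\ast$ and, upon applying $\hom_A(\argu,\F)$, computes $A^!$. Since by construction it is concentrated on the diagonal, the associated spectral sequence degenerates and one obtains a bigraded identification $A^!\simeq B^{!_A}\otimes B^!$ as right $B^!$-modules, exhibiting $A^!$ as free on the basis $B^{!_A}$.

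For the $\Leftarrow$ direction, assume $A^!$ is free as a right $B^!$-module. Choose a graded homogeneous basis $\{e_\alpha\}$ containing $1$, and let $E:=\spn_\F\{e_\alpha\}$. Koszul duality applied to the pair $(A,B)$ produces a linear complex of the form $\dots\to A\otimes E^\ast\to A\to B\to 0$, which up to identification computes $\tor^{B^!}_\bu(A^!,\F)$; since $A^!$ is $B^!$-free, this $\tor$ vanishes in positive degrees and the complex is exact. One thus obtains a linear free $A$-resolution of $B$, so $B$ is Koszul as $A$-module, i.e., $f$ is left Koszul. The main obstacle will be to make precise the Koszul duality translation between $\tor^{B^!}_\bu(A^!,\F)$ and $\ext^\bu_A(B,\F)$, which essentially requires carefully tracking the bigradings and exploiting Koszulity of $A$, $B$, $A^!$, $B^!$ at each step.

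Once the equivalence is established, the remaining assertions are formal: injectivity of $B^!\to A^!$ follows from $A^!$ being $B^!$-free with $1$ extending to a basis element, so that $B^!\cdot 1\simeq B^!$ embeds as a free summand; the quotient $A^!/A^!B^!_+$ identifies with the basis space, which by the above is $B^{!_A}=\bigoplus_i\ext^{i,i}_A(B,\F)$, and the module analogue of Theorem \ref{thm:diagext=dual} identifies this as precisely the dual Koszul $A^!$-module of $B$.
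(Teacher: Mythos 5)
The paper does not actually prove this lemma: it is imported verbatim as Corollary 5.9 from Polishchuk--Positselski \cite{pp}, so there is no internal argument to compare yours against. Your sketch follows the same Koszul-duality route as that source, and the architecture is sound: identify $H^\bu(A)$ with $A^!$ and $H^\bu(B)$ with $B^!$ via Theorem \ref{thm:diagext=dual}, splice the linear $A$-resolution of $B$ with the Koszul resolution of $\F$ over $B$ to obtain a minimal linear $A$-resolution of $\F$ with terms $A\otimes((B^{!_A})_i)^\ast\otimes((B^!)_j)^\ast$, and read off $A^!\simeq B^{!_A}\otimes B^!$ from minimality. The closing remarks (injectivity of $B^!\to A^!$ because $1$ extends to a homogeneous basis, and the identification of $A^!/A^!B^!_+$ with the basis space) are correct as stated.

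Two steps are asserted rather than proved, and they are where the content lies. First, in the forward direction the identification $A^!\simeq B^{!_A}\otimes B^!$ as graded vector spaces does follow from minimality, but the claim that it respects the \emph{right $B^!$-module structure} is not a consequence of a spectral sequence of vector spaces degenerating: you must check that the action obtained by lifting $B$-cocycles along $Q_\bu$ and extending over the total complex of $P_\bu\otimes_B Q_\bu$ coincides with right Yoneda multiplication by $f^!(b)$ in $A^!$; this change-of-rings compatibility needs its own argument. Second, and more seriously, in the converse the assertion that the candidate complex $\dots\to A\otimes E^\ast\to A\to B\to 0$ ``computes $\tor^{B^!}_\bu(A^!,\F)$'' is the theorem in disguise --- the homology of a complex of $A$-modules is not literally a $\tor$ over $B^!$, and the degree-by-degree identification of its failure of exactness with $\tor^{B^!}_i(A^!,\F)_j$ is precisely the nontrivial Koszul-duality input that \cite{pp} extract from their preceding results on dual Koszul modules rather than re-derive. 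A cleaner way to close this direction with tools already quoted in the paper: freeness of $A^!$ over $B^!$ gives, by the Eckmann--Shapiro Lemma \ref{shap}, $\ext^{ij}_{A^!}(A^!\otimes_{B^!}\F,\F)\simeq\ext^{ij}_{B^!}(\F,\F)$, which vanishes off the diagonal since $B^!$ is Koszul; hence $A^!/A^!B^!_+\simeq A^!\otimes_{B^!}\F$ is a Koszul $A^!$-module of degree $0$, and the duality between Koszul modules over $A^!$ and over $(A^!)^!=A$ then exhibits its quadratic dual, which one identifies with $B$ from the degree $0$ and $1$ data, as a Koszul $A$-module. Since you flagged the translation yourself, the plan is not wrong, but as written the converse is not yet a proof.
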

	\begin{prop}{\label{prop:constr ext}}
		Let $A$ be a Koszul $\F$-algebra, and let $L\xrightarrow{f}M\xrightarrow{\varepsilon}N\to 0$ be an exact sequence of Koszul $A$-modules of degree $0$.
		
		Then the following constraints hold:
		\begin{enumerate}
			\item $\ext^{i,j}_A(\ker\varepsilon,\F)=0$ for $j>i+1$, 
			\item $\ext^{i,j}_A(\ker f,\F)=0$ for $j>i+2$.
		\end{enumerate}
		In particular, $\ker f$ has no generator of degree $>2$.
		\begin{proof}
			(1) Consider the exact sequence $0\to \ker\varepsilon\to M\to N\to 0$. 
			It yields a long exact sequence \[\dots \to\ext^{ij}_A(M,\F)\to\ext^{ij}_A(\ker\varepsilon,\F)\to \ext^{i+1,j}_A(N,\F)\to\dots\]
			Since both $M$ and $N$ are Koszul modules, their $\ext$-groups are concentrated on the diagonal. 
			Then, for $j>i+1$, there holds $\ext^{ij}_A(M,\F)=\ext^{i+1,j}_A(N,\F)=0$, forcing $\ext^{ij}_A(\ker\varepsilon,\F)$ to vanish.
			
			(2) Similarly, the exact sequence $0\to\ker f\to L\to \ker \varepsilon\to 0$ induces the long exact sequence \[\dots \to\ext^{ij}_A(L,\F)\to\ext^{ij}_A(\ker f,\F)\to \ext^{i+1,j}_A(\ker\varepsilon,\F)\to\dots\]
			
			Let $j>i+2$. For (1), and since $L$ is Koszul, $\ext^{i+1,j}(\ker\varepsilon,\F)=\ext^{ij}_A(L,\F)=0$, forcing $\ext^{ij}_A(\ker f,\F)$ to vanish. 
		\end{proof}
	\end{prop}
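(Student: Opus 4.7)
The statement to prove is a two-part vanishing result on $\ext$-groups of the kernels appearing in the sequence $L\xrightarrow{f}M\xrightarrow{\varepsilon}N\to 0$. The plan is to run the long exact sequence of Theorem \ref{thm:longext} twice, once for each kernel, and read off the vanishing from the diagonal concentration of the Ext-groups of $L$, $M$, $N$.

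For part (1), I would start from the short exact sequence
\[
0\to\ker\varepsilon\to M\to N\to 0
\]
(using surjectivity of $\varepsilon$), and apply $\ext^{\bu,j}_A(\argu,\F)$. By Theorem \ref{thm:longext} I obtain in each internal degree $j$ a long exact sequence whose term $\ext^{i,j}_A(\ker\varepsilon,\F)$ sits between $\ext^{i,j}_A(M,\F)$ and $\ext^{i+1,j}_A(N,\F)$. Since $M$ and $N$ are Koszul of degree $0$, both of these groups vanish unless the relevant homological and internal degrees coincide on the diagonal. For $j>i+1$ one has $j\neq i$ and $j\neq i+1$, so the two neighbouring terms are $0$, forcing $\ext^{i,j}_A(\ker\varepsilon,\F)=0$.

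For part (2), I would use exactness at $M$, namely $\image f=\ker\varepsilon$, to rewrite $f$ as a surjection $L\twoheadrightarrow \ker\varepsilon$, and thus get the short exact sequence
\[
0\to\ker f\to L\to\ker\varepsilon\to 0.
\]
Applying the same machinery gives a long exact sequence relating $\ext^{i,j}_A(\ker f,\F)$ to $\ext^{i,j}_A(L,\F)$ and $\ext^{i+1,j}_A(\ker\varepsilon,\F)$. For $j>i+2$, the first of these vanishes by Koszulness of $L$ (since $j\neq i$), and the second vanishes by part (1) applied with shifted index, because $j>(i+1)+1$. Hence $\ext^{i,j}_A(\ker f,\F)=0$ in this range. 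The closing remark (no generators of degree $>2$) is then immediate from Corollary \ref{cor:gen module} applied to the case $i=0$, which gives $\ext^{0,j}_A(\ker f,\F)=0$ for all $j>2$.

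The argument is essentially a bookkeeping exercise in the diagonal concentration of Ext-groups combined with the long exact sequence, so I do not expect a real obstacle beyond keeping the shift $j\mapsto j$ with $i\mapsto i+1$ straight when feeding (1) into the proof of (2); the only subtle point is that one must bootstrap (1) before attacking (2), because $\ker\varepsilon$ is not assumed Koszul and its Ext-concentration is strictly weaker than the diagonal one available for $L$, $M$, $N$.
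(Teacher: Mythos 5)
Your argument is correct and follows exactly the same route as the paper: the long exact sequence for $0\to\ker\varepsilon\to M\to N\to 0$ gives (1), then the long exact sequence for $0\to\ker f\to L\to\ker\varepsilon\to 0$ combined with (1) gives (2), with the final claim on generators read off from the $i=0$ case. No gaps.
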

	One may write the previous result with more generality on the degree of generation of the modules, although for our purpose the latter is enough.
	\begin{cor}{\label{inj koszul}}
		Let $f:L\to M$ be a morphism of Koszul $A$-modules of degree $0$. Suppose that $f$ is injective in all the degrees up to the second, and that $\text{coker} f$ is Koszul. Then $f:L\to M$ is injective (in every degree). 
		\begin{proof}
			By Lemma \ref{prop:constr ext}, $\ker f$ has no generator of degree $>2$. 
			But since $\ker f_i$ is trivial for $i=0,1,2$, it must be $0$ for all $i\geq 0$.
		\end{proof}
	\end{cor}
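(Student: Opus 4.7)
The plan is a direct application of the preceding proposition. First, I would form the exact sequence
\[
L \xrightarrow{f} M \xrightarrow{\varepsilon} \operatorname{coker} f \to 0
\]
to which Proposition \ref{prop:constr ext}(2) applies, since by hypothesis $L$, $M$ and $\operatorname{coker} f$ are all Koszul $A$-modules of degree $0$ (for the cokernel, $A$-module generation in degree $0$ follows from $M$ being generated in degree $0$ and $\varepsilon$ being surjective). This yields
\[
\ext^{i,j}_A(\ker f,\F) = 0 \quad \text{for } j > i+2.
\]

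In particular, taking $i = 0$ gives $\ext^{0,j}_A(\ker f,\F) = 0$ for every $j > 2$. By Corollary \ref{cor:gen module}, this forces $\ker f$ to be generated, as an $A$-module, by elements of degree at most $2$. But the hypothesis that $f$ is injective in degrees up to the second means precisely that $(\ker f)_0 = (\ker f)_1 = (\ker f)_2 = 0$, so every candidate generator of $\ker f$ already vanishes. Consequently $\ker f = 0$, i.e., $f$ is injective in every degree.

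I do not expect any real obstacle: the whole argument is a packaging of the vanishing bound from Proposition \ref{prop:constr ext} together with the low-degree hypothesis. The only point to double-check is that the module $\operatorname{coker} f$ lies in the class to which the previous proposition applies; this is exactly the explicit Koszulity assumption placed on the cokernel in the statement, and the generation in degree $0$ is automatic from $M$ being so.
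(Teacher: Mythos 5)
Your proof is correct and follows essentially the same route as the paper's: both apply Proposition \ref{prop:constr ext}(2) to the sequence $L\to M\to \operatorname{coker} f\to 0$ to conclude that $\ker f$ has no generators in degree $>2$, and then use the vanishing of $(\ker f)_i$ for $i\le 2$ to kill all generators. You merely spell out the intermediate steps (the $i=0$ case of the $\ext$-vanishing and the appeal to Corollary \ref{cor:gen module}) that the paper leaves implicit.
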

\subsection{Bloch-Kato Lie algebras}

In this subsection we introduce the notion of Bloch-Kato Lie algebra and show some examples.
\begin{defin}
	Let $\li$ be an $\mathbb N$-graded $1$-generated $\F$-Lie algebra. We say that $\li$ is Bloch-Kato if all of its $1$-generated subalgebras are Koszul.
\end{defin}
Notice that this is equivalent to requiring that the inclusion $\mc M\to \li$ induce a surjective algebra homomorphism $H^\bbu(\li)\to H^\bbu(\mc M)$, for every $1$-generated subalgebra $\mc M$ of $\li$. 

\medspace

\textbf{Examples.}

(1) Let $\mc F$ be a free Lie algebra generated by elements of degree $1$.
	We will see in Theorem \ref{NS} that the $1$-generated subalgebras of $\mc F$ is again a free Lie algebra. For the cohomology of a free Lie algebra over a space $V$ is (isomorphic to) the quotient $\ten V/(V\otimes V)=\F\oplus V$, it is clear that $\mc F$ is Bloch-Kato.

(2) Also, every $1$-generated abelian Lie algebra is Bloch-Kato, as its universal envelope -- as well as that of any of its $1$-generated subalgebras -- is a polynomial ring. However, these are not the only examples of Bloch-Kato Lie algebras, as the example shows.

(3)	Let $\li$ be the Lie algebra generated by $2d$ elements $x_1,y_1,\dots,x_d,y_d$ with the single relation $\sum_i[x_i,y_i]=0$. Put $A=\ul$.

The following is a \textit{free resolution} of $\F$ over $A$: \[ [F_\bullet\to \F]=\dots\to 0\to A\overset{d_1}{\to} A^{2d}\overset{d_0}{\to} A\overset{\varepsilon}{\to} \F\to 0\]
where \[d_0:(a_1,b_1,\dots,a_d,b_d)\mapsto \sum a_ix_i+b_iy_i,\]
\[d_1:a\mapsto a(y_1,-x_1,\dots,y_d,-x_d)\]
and \[\varepsilon:x_i,y_i\mapsto 0,\ 1\mapsto 1.\]
That resolution is minimal and linear, whence, $\mc L$ is Koszul with cohomological dimension $cd \mc L=2$. 

This also shows that \[\ext^2_A(\F,A)\simeq \F\] with the right action of $A$ induced by the augmentation map $\varepsilon:A\to \F$.
Moreover, $\ext^i_A(\F,A)=0$ for $i\neq 2$, whence $A$ is a \textit{Poincaré-duality algebra} of dimension $2$.
\begin{prop}[Poincaré duality algebras]
	Let $A\neq 0$ be an associative $\F$-algebra of type FP 
	satisfying \[\ext^i_A(\F,A)=H^i(A,A)=0,\ i\neq n.\] 
	Let $D$ be the right $A$-module $H^n(A,A)$.
	Then for every left $A$-module $M$ there holds the following homological duality \[H^i(A,M)\simeq \tor^A_{n-i}(D,M).\]
	\begin{proof}
		Notice that one has $cd A=n$ since $A$ is of type FP. 
		Let $P_\bullet=(P_i)_{0\leq i\leq n}$ be a finite projective resolution of $\F$ over $A$, i.e. the sequence \[0\to P_n\to P_{n-1}\to\dots\to P_0\to \F\to 0\] is exact and $P_\bullet$ is a finitely generated projective $A$-module. 
		Let $\bar P^{-i}=\hom _A(P_i,A)$ be the dual complex. Thus, $\bar P^\bullet$ is projective. 
		The sequence $\bar P^0\to\dots\to \bar P^{-n}$ is exact, as $H_{-i}(\bar P^\bullet)=H^i(A,A)=0$, for $i\neq n$. 
		Moreover, $\ext^n_A(\F,A)=H_{-n}(\bar P^\bullet)=\bar P^{-n}/im(\bar P^{-n+1}\to \bar P^{-n})$, and thus we have a projective $A$-resolution for $D=H^n(A,A)=\ext_A^n(\F,A)$ \[\dots\to \bar P^{-(n-1)}\to \bar P^{-n}\to D;\] more precisely, $\bar P^{n+\bullet}$ is the projective resolution.
		For every $i$ and every $A$-module $M$, $\hom_A(P_i,M)\simeq \bar P^{-i}\otimes_A M$, for $P_\bullet$ is finitely generated and projective. 
		Finally, \begin{align*}H^i(A,M)&=\ext^i_A(\F,M)=H_{-i}(\hom_A(P_\bullet,M))\\ 
			&\simeq H_{-i}(\bar P^\bullet\otimes_A M)=H_{n-i}(\bar P^{n+\bullet}\otimes_A M)\\
			&=\mbox{Tor}_{n-i}^A(D,M)\end{align*}
		i.e. \[H^i(A,M)=\ext^i_A(\F,M)\simeq\mbox{Tor}_{n-i}^A(D,M)\]
		
	\end{proof}
\end{prop}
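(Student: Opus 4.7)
The plan is to exploit the type FP hypothesis by working with a finite projective resolution of $\F$ by finitely generated projective $A$-modules, and then showing that its $A$-dual, when reindexed, is a projective resolution of $D$ as a right $A$-module.

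First, since $A$ is of type FP and $\ext^i_A(\F,A)=0$ for $i>n$, I would pick a finite projective resolution $0\to P_n\to P_{n-1}\to\dots\to P_0\to \F\to 0$ where each $P_i$ is a finitely generated projective left $A$-module. Dualizing by applying $\hom_A(\argu,A)$ yields a cochain complex $\bar P^{-\bullet}$ of finitely generated projective right $A$-modules. Its cohomology in degree $i$ is by definition $\ext^i_A(\F,A)$, so the vanishing hypothesis says that the complex is exact except in the top degree $-n$, where its cohomology equals $D$. Re-indexing, the truncation $\bar P^{n+\bullet}$ is therefore a finite projective resolution of $D$ as a right $A$-module.

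Next, I would use the standard natural isomorphism $\hom_A(P_i,M)\simeq \hom_A(P_i,A)\otimes_A M=\bar P^{-i}\otimes_A M$, which is valid precisely because each $P_i$ is finitely generated projective. This isomorphism is natural in $i$, hence upgrades to an isomorphism of complexes $\hom_A(P_\bullet,M)\simeq \bar P^{-\bullet}\otimes_A M$. Taking cohomology/homology and matching indices would then give the chain of identifications
\[
H^i(A,M)=H^i(\hom_A(P_\bullet,M))\simeq H_{-i}(\bar P^{-\bullet}\otimes_A M)=H_{n-i}(\bar P^{n+\bullet}\otimes_A M)=\tor^A_{n-i}(D,M),
\]
which is the desired duality.

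The main potential obstacle is the bookkeeping of signs and indices when passing between the cochain complex $\bar P^{-\bullet}$ and the chain complex $\bar P^{n+\bullet}$ used as a resolution of $D$; one has to be careful that the differentials really do assemble into an exact sequence in the right direction, and that the dual of a finitely generated projective $A$-module remains finitely generated projective as a right module (which is standard but worth noting). Beyond this, everything reduces to the two classical ingredients: a finite projective resolution of $\F$ and the tensor-hom identification for finitely generated projectives.
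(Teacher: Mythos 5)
Your proposal is correct and follows essentially the same route as the paper's proof: dualize a finite projective resolution of $\F$ into $A$, use the vanishing hypothesis to recognize the shifted dual complex $\bar P^{n+\bullet}$ as a projective resolution of $D$, and apply the tensor-hom identification $\hom_A(P_i,M)\simeq \bar P^{-i}\otimes_A M$ for finitely generated projectives. The chain of identifications at the end matches the paper's computation line for line.
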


It follows from the latter result that for our Lie algebra $\li$, it holds
\begin{align*}
	&\ext^0_A(\F,\argu)\simeq \mbox{Tor}_2^A(\F,\argu)\\
	&\ext^1_A(\F,\argu)\simeq \mbox{Tor}_1^A(\F,\argu)\\
	&\ext^2_A(\F,\argu)\simeq \mbox{Tor}_0^A(\F,\argu)=\F\otimes_A \argu
\end{align*}

Now, since the vector space dual functor $\argu^\ast=\hom_\F(\argu,\F)$ is exact on finite dimensional vector spaces, one has \begin{align}\label{torext}\mbox{Tor}_2^A(\F,\F)\simeq \ext_A^2(\F,\F)^\ast.\end{align}\\
Let now $\mc M$ be a proper Lie subalgebra of $\mc L$, and we show that $\mc M$ has cohomological dimension at most $1$, and thus it is free. This finally proves that $\mc L$ is Bloch-Kato.

In order to do that, we compute $H^2(\mc M,\F)$. 
For the isomorphism \ref{torext} holds, it is enough to compute $H_2(\mc M, \F)$. 
By applying the homological version of Eckmann-Shapiro Lemma (cf. \cite{weib}) to the subalgebra $B=\mc U(\mc M)$ of $A$, we get \[\mbox{Tor}_2^B(\F,\F)\simeq \mbox{Tor}_2^A(\F,A\otimes _B \F).\]
Therefore, for the duality relations of $A$, \[\mbox{Tor}_2^B(\F,\F)\simeq \ext^0_A(\F,A\otimes_B \F)=\hom_A(\F,A\otimes_B \F).\]
It follows from PBW theorem that the latter space is trivial, namely, the $A$-module $A\otimes _B\F$ has no $A$-fixed points. 
Indeed, let $\alpha:\F\to A\otimes _B \F$ be an $A$-linear map. For $\F$ is a simple $A$-module, the map $\alpha$ is determined by $\alpha(1)$. 
Complete an $\F$-basis $\{y_i,\ i\in I\}$ of $\mc M$ to an $\F$-basis $\{y_i,z_j,\ i\in I,\ j\in J\}$ for $\mc L$.
By PBW theorem, $B$ has $\F$-basis $\{y_{i_1}\cdots y_{i_n}\vert\ i_1,\dots, i_n\in I,\ n\in\mathbb N_0\}$ and it is infinite-codimensional in $A$, whose $\F$-basis can be suitably chosen to be $\{z_{j_1}\cdots z_{j_m}y_{i_1}\cdots y_{i_n}\vert\ i_1,\dots, i_n\in I,\ j_1\dots,j_m\in J,\ m,n\in\mathbb N_0\}$.
Note now that the induced module can be written as the following quotient of $A$:
\[A\otimes_B \F\simeq A/AB_+.\]
With respect to the above isomorphism, if $\alpha(1)=u+AB_+$, one can choose $u\in A$ to be of the form \[ u= \sum r_{j_1,\dots,j_r}z_{j_1}\cdots z_{j_r}\] for some $r_{j_1,\dots,j_r}\in \F$. Finally, \[0=\alpha(z_j\cdot 1)=z_j\alpha(1),\ j\in J\]
implies $\sum r_{j_1,\dots,j_r}z_jz_{j_1}\cdots z_{j_r}\in AB^+$, and thus the $r_{j_1,\dots,j_r}$'s need to vanish. \\

We conclude that $H^2(\mc M,\F)=0$, and thus $\mc M$ is free by Theorem \ref{freecoh}.

\section{Free products}
In this section, we recall the notion of free products of Lie algebras, and we state the graded version of the PBW Theorem. For a more detailed description, see \cite{bou}.

\begin{defin}
	Let $\mc A$ and $\mc B$ be two $\mathbb N$-graded Lie algebras over a field $\mathbb F$. Their free product is an $\mathbb N$-graded Lie algebra $\mc F$ endowed with two morphisms of graded Lie algebras $\iota_{\mc A}:\mc A\to\mc F$ and $\iota_{\mc B}:\mc B\to \mc F$, which satisfy the following universal property. For any pair of morphisms $\alpha:\mc A\to \li$ and $\beta:\mc B\to \li$ of graded Lie algebras, there is a unique morphism $\phi: \mc F\to \li$ of graded Lie algebras such that $\alpha=\phi\circ\iota_{\mc A}$ and $\beta=\phi\circ\iota_{\mc B}$.
\end{defin}

We also have a notion of free product of connected $\F$-algebras. In particular, if $A$ and $B$ are connected $\F$-algebras, we may define their free product $F=A\amalg B$ as follows:
\begin{enumerate}
	\item $F_0\simeq \F$, namely, $F$ is connected;
	\item $F_n=A_n\oplus [(A_{n-1}\otimes B_1)\oplus (B_1\otimes A_{n-1}) ]\oplus[(A_{n-2}\otimes B_2)\oplus (B_1\otimes A_{n-2}\otimes B_1)\oplus( B_2\otimes A_{n-2})]\oplus\dots\oplus B_n$ for $n>0$.
\end{enumerate}
The multiplication is given by concatenation followed, possibly, by the composition of elements that do belong to the same factor.

Eventually, there is a description for the free product of $\mathbb N$-graded Lie algebras $\mc A$ and $\mc B$. Let $\mc U(\mc A)$ and $\mc U(\mc B)$ be the universal envelopes of $\mc A$ and $\mc B$ respectively. Now, the free product of $\mc A$ and $\mc B$ can be described as the Lie subalgebra of $(\mc U(\mc A)\amalg\mc U(\mc B))_L$ generated by the images of $\mc A$ and $\mc B$. 

Recall the following important result.
\begin{thm}[Poincaré-Birkhoff-Witt]
	Let $\li$ be a $\mathbb N$-graded $\F$-Lie algebra. Let $\mathcal B$ be an ordered graded basis of $\li$ as a vector space. Then \[\mc\mathcal B=\set{v_1v_2\dots v_k}{v_i\in\mathcal B,\ v_i\leq v_{i+1}, k\geq 0}\] is a basis of $\ul$ as an $\F$-vector space. 
	
	Moreover, if $\mc M$ is a graded subalgebra of $\li$, then $\ul$ is a free $\mathbb N_0$-graded $\mc U(\mc M)$-bimodule.
\end{thm}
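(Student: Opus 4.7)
The plan is to prove the first assertion (the PBW basis) by the classical symmetric-algebra argument, carefully tracking the grading, and then to deduce the bimodule freeness from the PBW basis by a suitable choice of ordering on $\mathcal{B}$.

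For the first part, spanning is straightforward: the defining relation $xy - yx = [x,y]$ in $\ul$ lets us rewrite any monomial $v_{i_1}\cdots v_{i_k}$ in basis elements as an ordered monomial plus a correction expressible, via induction on the degree $\sum \deg(v_{i_s})$ and on the number of inversions, as a combination of monomials of the same total degree but strictly fewer inversions, together with shorter monomials (coming from the bracket term, which drops the length by one but preserves total internal degree because $[\li_n,\li_m]\subseteq \li_{n+m}$). This induction terminates because for each fixed total internal degree $n$ only finitely many monomials exist (by local finite-dimensionality of $\li$). Hence $\bar{\mathcal B}$ spans $\ul$ as a graded $\F$-vector space.

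For linear independence, I would follow Bourbaki and construct a representation of $\ul$ on the symmetric algebra $\sym(\li)$, taking the canonical basis induced by $\mathcal{B}$. On ordered monomials one defines the action of $v\in\mathcal{B}$ by the prescription $v \cdot (v_{i_1}\cdots v_{i_k}) = v v_{i_1}\cdots v_{i_k}$ whenever $v\le v_{i_1}$, and otherwise extends recursively using the would-be commutation relation, reducing the number of out-of-order pairs at each step. The key verification is that these operators satisfy $\rho(x)\rho(y) - \rho(y)\rho(x) = \rho([x,y])$, so they factor through $\ul$; this is a finite check on a triple of basis elements after ordering them, and it respects the grading because the bracket is a graded map. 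Once the action exists, the image of an ordered monomial $v_{i_1}\cdots v_{i_k}\in\bar{\mathcal B}$ applied to $1\in\sym(\li)$ is precisely the corresponding ordered monomial in $\sym(\li)$, and the ordered monomials in $\sym(\li)$ are $\F$-linearly independent by the usual polynomial argument. Hence $\bar{\mathcal B}$ is $\F$-linearly independent in $\ul$. I expect the slightly technical point here to be the well-definedness and associativity check of the representation; everything else is bookkeeping, and gradings are preserved at every stage.

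For the second assertion, choose an ordered graded basis $\mathcal{B}_{\mc M}$ of $\mc M$ and complete it to an ordered graded basis $\mathcal{B} = \mathcal{B}_{\mc N} \sqcup \mathcal{B}_{\mc M}$ of $\li$ by picking a graded vector-space complement $\mc N$, with the total order arranged so that every element of $\mathcal{B}_{\mc N}$ is strictly smaller than every element of $\mathcal{B}_{\mc M}$. By the PBW basis just established, each element of $\ul$ is a unique $\F$-linear combination of ordered monomials $v_{i_1}\cdots v_{i_r} w_{j_1}\cdots w_{j_s}$ with $v_{i_t}\in\mathcal{B}_{\mc N}$ and $w_{j_u}\in\mathcal{B}_{\mc M}$; that is,
\[
\ul \;=\; \bigoplus_{\omega} \F\cdot \omega \cdot \mc U(\mc M),
\]
where $\omega$ ranges over ordered monomials in $\mathcal{B}_{\mc N}$. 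This exhibits $\ul$ as a free right $\mc U(\mc M)$-module on a graded basis, and the symmetric argument (swap the order so that $\mathcal{B}_{\mc M}$ comes first) yields freeness on the left. The main thing to check is that a graded complement $\mc N$ to $\mc M$ in $\li$ exists, which is immediate since we are working with locally finite graded vector spaces, and that the grading is preserved throughout, which follows from our basis being a graded basis. The chief obstacle, as in the first part, is the linear-independence step inside PBW; once that is in place, the bimodule decomposition is a direct consequence of the uniqueness of the ordered expansion.
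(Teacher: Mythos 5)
The paper does not actually prove this theorem: it is recalled as a classical result at the start of the section on free products, with the reader referred to Bourbaki, so there is no in-paper argument to compare yours against. Your proposal is the standard Bourbaki/Jacobson proof --- rewriting to ordered monomials for spanning, and the representation on the symmetric algebra $S_\bullet(\li)$ for linear independence --- adapted to track the internal grading, and it is correct as a sketch; the one genuinely technical point, the well-definedness of the recursively defined action on $S_\bullet(\li)$, is exactly the point you isolate, and the grading causes no new difficulty since the bracket is a graded map. Your deduction of the module statement from the PBW basis (complete a graded basis of $\mc M$ to a graded basis of $\li$, placing the complement's basis entirely before, resp.\ after, that of $\mc M$) is also the intended argument; it is the same device the paper itself uses later in the proof of Lemma \ref{freeres}. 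One caveat on wording: what your argument yields is that $\ul$ is free as a left $\mc U(\mc M)$-module and, separately (via the opposite ordering), free as a right $\mc U(\mc M)$-module; it does not, and cannot, show freeness over $\mc U(\mc M)\otimes\mc U(\mc M)^{op}$ --- already for $\mc M=\li$ one-dimensional abelian, $\F[x]$ is not free over $\F[x]\otimes\F[x]$ --- so the theorem's phrase ``free bimodule'' should be read as one-sided freeness on each side, which is all the paper ever uses (e.g.\ in the Eckmann--Shapiro Lemma and in the construction of $\ind_{\mc H}^{\li}$).
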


Let $\li$ be an $\mathbb N$-graded $\F$-Lie algebra, and denote by $\mc U(\li)$ its universal envelope. 
If $\mc H\subseteq \li$ is an $\mathbb N$-graded $\F$-Lie subalgebra, then by the Poincar\'e-Birkhoff-Witt theorem, $\mc U(\li)$ is a $\mathbb N_0$-graded free left $\mc U(\mc H)$-module. 
Let $M$ be a $\mathbb N_0$-graded left $\mc H$-module, and define \[\ind_{\mc H}^{\li}(M):=\mc U(\li)\otimes_{\mc U(\mc H)} M.\]
The functor $\ind_{\mc H}^{\li}(\argu)$ is called the \textbf{induction functor}. It is a covariant additive exact functor which is left adjoint to the restriction functor $\res_{\mc H}^{\li}(\argu)$, i.e. one has natural isomorphisms of bifunctors \begin{equation}{\label{ind}}
	\hom_{\li}(\ind_{\mc H}^{\li}(M),Q)\simeq \hom_{\mc H}(M,\res_{\mc H}^{\li}(Q)).
\end{equation}
The natural isomorphisms (\ref{ind}) are also called the Nakayama relations. 
It follows that the identity map $\F\to \F$ of the trivial $\mc H$-module $\F$ induces a homomorphism of graded left $\li$-modules \begin{equation}{\label{canmap}}
	\varepsilon^{\mc H}:\ind_{\mc H}^{\li}(\F)\longrightarrow \F.
\end{equation} 
\begin{thm}{\label{thm:mayervietoris}}
	Let $\li$ be an $\mathbb N$-graded $1$-generated $\F$-Lie algebra containing two $\mathbb N$-graded $\F$-subalgebras $\mc A$ and $\mc B$ satisfying $\li=\gen{\mc A,\mc B}$. 
	Then the following are equivalent: 
	\begin{itemize}
		\item[(i)] $\li\simeq \mc A\amalg\mc B$,
		\item[(ii)] $\ker\left(\varepsilon^{\mc A}-\varepsilon^{\mc B}:\ind_{\mc A}^{\li}(\F)\oplus\ind_{\mc B}^{\li}(\F)\to \F\right)\simeq \mc U(\mc \li)$.
	\end{itemize}
	\begin{proof}
		(i)$\Rightarrow$(ii) is Bass-Serre Theory for $\mathbb N$-graded Lie algebras (cf. \cite{cmp}).
		
		Suppose now that (ii) holds.
		Note that the canonical homomorphism $\phi:\mc F\to\mc L$ is surjective, where $\mc F=\mc A\amalg\mc B$. 
		By (ii), the sequence 
		\begin{equation}
			\label{mayervietoris}
			0\to \mc U(\li)\to \ind_{\mc A}^{\li}(\F)\oplus\ind_{\mc B}^{\li}(\F)\to \F\to 0
		\end{equation}
		is exact. 
		The same short exact sequence holds for $\mc F$, by Bass-Serre theory for graded Lie algebras. 
	By the Eckmann-Shapiro Lemma,
		\begin{align*}
			\ext_{\mc U(\li)}^\bu(\mc U(\li)\otimes_{\mc U(\mc A)}\F,\F)\simeq \ext^\bu_{\mc U(\mc A)}(\F,\F),\\
			\ext_{\mc U(\li)}^\bu(\mc U(\li)\otimes_{\mc U(\mc B)}\F,\F)\simeq \ext^\bu_{\mc U(\mc B)}(\F,\F).
		\end{align*}
		Thus we get a long exact sequences from Theorem \ref{thm:longext} induced by (\ref{mayervietoris})
		\begin{align*}
			\ext^1_{\mc U(\li)}(\mc U(\li),\F)\to \ext^2_{\mc U(\mc A)}(\F,\F)\oplus \ext^2_{\mc U(\mc B)}(\F,\F)\to \\
			\to\ext^2_{\mc U(\mc L)}(\F,\F)\to \ext^2_{\mc U(\mc L)}(\mc U(\mc L),\F)\to\dots
		\end{align*}
		But $\ext^i_{\mc U(\mc L)}(\ul,\F)=0$ for $i>0$, and thus the inflation $\inf:H^2(\li,\F)\to H^2(\mc F,\F)$ is an isomorphism $H^2(\mc L,\F)\simeq H^2(\mc A,\F)\oplus H^2(\mc B,\F)\simeq H^2(\mc U(\mc F),\F)$.
		
		Now, consider the short exact sequence associated with the surjection $\phi:\mc F\to \mc L$, \[0\to I\to\mc F\overset{\phi}{\to}\li\to 0.\]
		This yields the $5$-term exact sequence in cohomology
		\begin{equation*}
			\xymatrix{
				0\ar[r]&H^1(\li,\F)\ar[r]^{H^1(\phi)}&H^1(\mc F,\F)\ar[r]^\alpha&
				H^1(I,\F)^{\li}\ar[d]\\
				&&H^2(\mc F,\F)&H^2(\li,\F)\ar[l]\\}
		\end{equation*}
		But $H^i(\li,\F)\to H^i(\mc F,\F)$, for $i=1,2$, are the isomorphisms $H^1(\phi)$ and $\inf$, and thus $H^1(I,\F)^{\li}=0$. As $I $ is an $\mathbb N$-graded ideal of $\mc F $, one has $H^1(I ,\F)=\hom_{\mbox{\small{Lie}}}(I ,\F)=\hom_{\F}(I /[I ,I ],\F)$. 
		Thus, $H^1(I ,\F)^{\li }=0$ implies $I /[\mc F ,I ]=0$, and therefore $I =[\mc F ,I ]$. 
		Now, suppose $n=\min\set{m\geq 0}{I_m\neq 0}$ is finite. One has $I_n=[I ,\mc F ]_n=\sum_{1\leq j<n}[I_j,\mc F_{n-j}]=0$, since $I $ is a graded ideal, whence $I =0$, proving that $\ker(\phi)=I=0$.
	\end{proof}
\end{thm}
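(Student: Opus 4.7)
The plan is to split the equivalence in the obvious way: the direction (i) $\Rightarrow$ (ii) is standard Bass-Serre theory for $\mathbb{N}$-graded Lie algebras (cf. \cite{cmp}), which produces precisely the short exact sequence
\begin{equation*}
0 \to \ul \to \ind_{\mc A}^{\li}(\F) \oplus \ind_{\mc B}^{\li}(\F) \xrightarrow{\varepsilon^{\mc A}-\varepsilon^{\mc B}} \F \to 0
\end{equation*}
for a free product. So all the content lies in the converse.

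For (ii) $\Rightarrow$ (i), I would use the universal property to build the canonical surjection $\phi\colon \mc F := \mc A\amalg\mc B \twoheadrightarrow \li$ and reduce to showing $I := \ker\phi = 0$. The strategy is to use (ii) together with the Bass-Serre sequence for $\mc F$ as two parallel Mayer-Vietoris short exact sequences, one for $\li$ and one for $\mc F$, and to turn them into a cohomological comparison between $H^\bu(\li,\F)$ and $H^\bu(\mc F,\F)$. Concretely, I would apply $\ext^\bu_{\ul}(\argu,\F)$ to the first sequence and $\ext^\bu_{\mc U(\mc F)}(\argu,\F)$ to the second; by the Eckmann-Shapiro Lemma (Theorem \ref{shap}) the induced modules contribute $H^\bu(\mc A,\F)$ and $H^\bu(\mc B,\F)$ in both cases, while the free-module term $\ul$ (resp.\ $\mc U(\mc F)$) has vanishing higher $\ext$ against $\F$. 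The two long exact sequences then force $\phi$ to induce isomorphisms $H^i(\li,\F)\simeq H^i(\mc F,\F)$ for $i=1,2$.

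With this in hand, I would feed the isomorphisms on $H^1$ and $H^2$ into the $5$-term exact sequence associated with the Lie algebra extension $0\to I\to\mc F\to\li\to 0$, obtaining $H^1(I,\F)^{\li}=0$. Since $I$ is a graded ideal, $H^1(I,\F)=\hom_\F(I/[I,I],\F)$, so coinvariance under the $\li$-action unwinds to $I=[\mc F,I]$. A minimum-degree argument then closes the proof: if $n$ were the smallest degree in which $I$ were nonzero, the identity $I_n=\sum_{1\leq j<n}[I_j,\mc F_{n-j}]$ would force $I_n=0$, a contradiction; hence $I=0$ and $\phi$ is an isomorphism.

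The step I expect to be most delicate is ensuring that the two Mayer-Vietoris sequences -- the one assumed via (ii) over $\li$, and the Bass-Serre one over $\mc F$ -- are naturally compatible along $\phi$, so that the comparison map on cohomology is genuinely the inflation induced by $\phi$ rather than an abstract isomorphism. Without this naturality, the identification $H^2(\li,\F)\simeq H^2(\mc F,\F)$ could not be plugged into the $5$-term sequence of the extension $0\to I\to\mc F\to\li\to 0$. The other potential subtlety is keeping track of the internal grading throughout, since the whole argument takes place in the $\mathbb{N}$-graded category; but once the cohomological comparison is in place, the final minimum-degree step uses the grading in an essential but elementary way.
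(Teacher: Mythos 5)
Your proposal is correct and follows essentially the same route as the paper's own proof: Bass--Serre theory for (i)$\Rightarrow$(ii), then for the converse the comparison of the two Mayer--Vietoris sequences via Eckmann--Shapiro to get isomorphisms on $H^1$ and $H^2$, the $5$-term exact sequence to conclude $I=[\mc F,I]$, and the minimal-degree argument to force $I=0$. The naturality concern you flag is real but is also left implicit in the paper, so there is nothing further to add.
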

\begin{lem}{\label{cohom free prod}}
	Let $\mc A$ and $\mc B$ be two $\mathbb N$-graded finitely generated $\F$-Lie algebras. Then there is an isomorphism of graded algebras \[H^\bu(\mc A\amalg\mc B,\F)\simeq H^\bu(\mc A,\F)\sqcap H^\bu(\mc B,\F),\] where $\sqcap$ denotes the product in the category of graded connected algebras.
	\begin{proof}
		Let $\li=\mc A\amalg B$. Then, by Theorem \ref{thm:mayervietoris}, there is a short exact sequence of $\mc U(\li)$-modules \[0\to \mc U(\li)\to \ind_{\mc A}^{\li}(\F)\oplus\ind_{\mc B}^{\li}(\F)\to \F\to 0.\]
		The latter and the Eckmann-Shapiro Lemma induce a long exact sequence 
		\begin{align*}
			\dots\to\ext^i_{\mc U(\li)}(\mc U(\li),\F)\to \ext^{i+1}_{\mc U(\mc A)}(\F,\F)\oplus \ext^{i+1}_{\mc U(\mc B)}(\F,\F)\to \\
			\to\ext^{i+1}_{\mc U(\mc L)}(\F,\F)\to \ext^{i+1}_{\mc U(\mc L)}(\mc U(\mc L),\F)\to\dots
		\end{align*}
		and it follows that for $i\geq 1$, 
		\begin{equation}\label{prod coho}
			\ext_{\mc U(\li)}^i(\F,\F)\simeq \ext_{\mc U(\mc A)}^i(\F,\F)\oplus \ext_{\mc U(\mc B)}^i(\F,\F),
		\end{equation}
		or $H^i(\mc L,\F)\simeq H^i(\mc A,\F)\oplus H^i(\mc B,\F)$.
		
		Now, the natural inclusions of $\mc A$ and $\mc B$ into $\mc A\amalg\mc B$ induce the algebra homomorphisms $H^\bu(\mc A\amalg\mc B)\to H^\bu(\mc A)$ and $H^\bu(\mc A\amalg\mc B)\to H^\bu(\mc B)$. By the universal property of the direct product, we get an algebra homomorphism $H^\bu(\mc A\amalg\mc B)\to H^\bu(\mc A)\sqcap H^\bu(\mc B)$, that is an isomorphism, in the light of (\ref{prod coho}).
		
	\end{proof}
\end{lem}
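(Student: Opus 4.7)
The plan is to deduce the lemma from the Mayer–Vietoris–type sequence established in Theorem~\ref{thm:mayervietoris}, convert it into a statement about cohomology via the Eckmann–Shapiro Lemma, and then promote the resulting vector space isomorphism to an algebra isomorphism using the universal property that defines $\sqcap$.

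Setting $\li=\mc A\amalg\mc B$, Theorem~\ref{thm:mayervietoris} supplies the short exact sequence of left $\ul$-modules
\[
0\to \ul\to \ind_{\mc A}^{\li}(\F)\oplus\ind_{\mc B}^{\li}(\F)\to \F\to 0.
\]
Applying $\ext_{\ul}^\bu(\argu,\F)$ gives a long exact sequence. By the Eckmann–Shapiro Lemma (Theorem~\ref{shap}), which applies because the PBW theorem makes $\ul$ a free (hence projective) right module over $\mc U(\mc A)$ and over $\mc U(\mc B)$, the middle terms simplify to $H^\bu(\mc A,\F)$ and $H^\bu(\mc B,\F)$; and since $\ul$ is a free $\ul$-module, $\ext^i_{\ul}(\ul,\F)=0$ for $i\geq 1$. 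The long exact sequence therefore collapses to an isomorphism of graded vector spaces
\[
H^i(\li,\F)\simeq H^i(\mc A,\F)\oplus H^i(\mc B,\F)\qquad (i\geq 1),
\]
and in degree $0$ both sides are $\F$. One should also carefully check that the connecting map at $i=0$ is surjective (as in the proof of Theorem~\ref{thm:mayervietoris}), so no obstruction appears at the bottom.

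Next I would upgrade this to an isomorphism of graded algebras. The inclusions $\iota_{\mc A}:\mc A\hookrightarrow\li$ and $\iota_{\mc B}:\mc B\hookrightarrow\li$ are Lie algebra homomorphisms, so they induce graded algebra homomorphisms $\iota_{\mc A}^\ast:H^\bu(\li,\F)\to H^\bu(\mc A,\F)$ and $\iota_{\mc B}^\ast:H^\bu(\li,\F)\to H^\bu(\mc B,\F)$ on cup-product cohomology. Since both $H^\bu(\mc A,\F)$ and $H^\bu(\mc B,\F)$ are connected graded algebras and $\sqcap$ denotes the categorical product in connected graded algebras, the pair $(\iota_{\mc A}^\ast,\iota_{\mc B}^\ast)$ factors through a unique graded algebra homomorphism
\[
\Phi:H^\bu(\li,\F)\longrightarrow H^\bu(\mc A,\F)\sqcap H^\bu(\mc B,\F).
\]
By construction of $\sqcap$, in positive degrees the underlying vector space of the product is just $H^i(\mc A,\F)\oplus H^i(\mc B,\F)$, and the component maps of $\Phi$ are precisely $\iota_{\mc A}^\ast$ and $\iota_{\mc B}^\ast$. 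These are exactly the maps appearing in the long exact sequence above, which I showed realize the splitting; hence $\Phi$ is the vector space isomorphism already constructed. Being simultaneously an algebra homomorphism and a bijection, $\Phi$ is the required algebra isomorphism.

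The main point to be careful about is that the decomposition produced by the Mayer–Vietoris sequence coincides, under identification, with the one coming from the projections $\iota_{\mc A}^\ast$ and $\iota_{\mc B}^\ast$; this requires tracing the connecting homomorphism and the Eckmann–Shapiro identifications through the Nakayama adjunction \eqref{ind}, so that the two descriptions of the splitting agree. Once this compatibility is in place, the argument goes through as sketched; the content is truly the Mayer–Vietoris sequence together with the categorical universal property of $\sqcap$.
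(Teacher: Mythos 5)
Your proposal follows essentially the same route as the paper's proof: the Mayer--Vietoris short exact sequence from Theorem~\ref{thm:mayervietoris}, the Eckmann--Shapiro identification of the middle terms, the vanishing of $\ext^i_{\ul}(\ul,\F)$ for $i\geq 1$, and the universal property of $\sqcap$ to upgrade the vector space splitting to an algebra isomorphism. Your additional attention to the degree-zero boundary and to the compatibility of the two splittings is a reasonable refinement of details the paper leaves implicit, but it is not a different argument.
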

	\section{Universally Koszul algebras}
	
	In this section we focus on the graded-commutative algebras, whose main example is the cohomology algebra of a Lie algebra. Recall that a graded algebra $A=\oplus_{i\geq 0}A_i$ is said to be \textbf{graded-commutative} if $ab=(-1)^{nm}ba$ for any homogenous elements $a\in A_{n},\ b\in A_m$.
		For a graded-commutative algebra $A$, define the \textbf{elementary skew-extension ring} $A[x]$ as the vector space $A\oplus Ax$, where $x$ has degree $1$, i.e., $A[x]\simeq A\oplus A[-1]$ as graded $\F$-vector spaces, endowed with the following product:
	\[(a_1+a_2x)(a_1'+a_2'x)=a_1a_1'+(a_1a_2'+(-1)^{\abs{a_1'}}a_2a_1')x.\]
	The elementary skew-extension algebra coincides thus with the free product in the category of the graded-commutative algebras of $A$ and $\exa(x)$, the free graded-commutative algebra on a single element $x$.
	
	Also, for a graded-commutative algebra $B$, define the \textbf{direct sum} $A\sqcap B$ as the categorical product of $A$ and $B$ in the category of graded-commutative connected algebras. 
	Explicitely, it is the connected algebra such that $(A\sqcap B)_i=A_i\oplus B_i$, $i>0$, and the multiplication is given component-wise.
	
\begin{defin}
		A graded-commutative algebra $A$ is said to be \textbf{universally Koszul} if $A\to A/I$ is a Koszul homomorphism for any ideal $I$ that is generated by elements of degree $1$.
\end{defin} In particular, since $A_+$ is a $1$-generated  ideal, the fact that $A\to A/A_+=\F$ is a Koszul homomorphism implies that $A$ is Koszul (cfr. Proposition \ref{koskos}).
	\begin{lem}{\label{lem:pol ring}}
		Let $A$ and $B$ be universally Koszul graded-commutative algebras. Then,
		\begin{enumerate}
			\item $A[x]$ is universally Koszul
			\item The direct sum $A\sqcap B$ is universally Koszul
		\end{enumerate}
		
		\begin{proof}
%
%
			
			See Proposition 30 in \cite{enhanced}.
	\end{proof}\end{lem}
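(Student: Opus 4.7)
The plan is to establish both statements by constructing explicit linear free resolutions of the relevant quotients, reducing in each case to the universal Koszulness of $A$ (and $B$) through a case analysis on the degree-$1$ generating subspace of the ideal.

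For part (1), I would analyze an ideal $I \subseteq A[x]$ generated by a subspace $V \subseteq A[x]_1 = A_1 \oplus \F x$, splitting into two cases depending on whether $V \subseteq A_1$. When $V \subseteq A_1$, the ideal $I$ is extended from $J = (V) \subseteq A$, and $A[x]/I \cong A[x] \otimes_A A/J$. Since $A[x]$ is $A$-free of rank $2$ with generators in degrees $0$ and $1$, tensoring a linear $A$-resolution of $A/J$ (provided by universal Koszulness of $A$) with $A[x]$ yields a linear $A[x]$-resolution of $A[x]/I$. Otherwise, I would pick $a_0 \in A_1$ with $a_0 + x \in V$ and set $V_0 = V \cap A_1$, so that $V = V_0 \oplus \F(a_0 + x)$. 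The substitution $x \mapsto -a_0$ induces an isomorphism $A[x]/I \cong A/(V_0)$ of $A[x]$-modules, where the $A[x]$-action factors through the algebra map $A[x] \to A$, $x \mapsto -a_0$. To resolve $A/(V_0)$ linearly over $A[x]$, I would first build a Koszul-type linear $A[x]$-resolution of $A$ obtained by iterating multiplication by $x + a_0$, and then form its total tensor product with the linear $A$-resolution of $A/(V_0)$.

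For part (2), the central structural fact is that $A_+ \cdot B_+ = 0$ inside $A \sqcap B$. Let $V = I_1 \subseteq A_1 \oplus B_1$ with projections $V_A = \pi_A(V)$ and $V_B = \pi_B(V)$, and set $J_A = (V_A) \subseteq A$, $J_B = (V_B) \subseteq B$. A direct computation exploiting the vanishing of cross products shows that $I_n = (J_A)_n \oplus (J_B)_n$ for $n \geq 2$, while $I_1 = V$ may differ from $V_A \oplus V_B$ whenever $V$ has a diagonal component. Consequently there is a short exact sequence of $A \sqcap B$-modules
\[
0 \to K \to (A \sqcap B)/I \to A/J_A \sqcap B/J_B \to 0,
\]
where $K$ is concentrated in degree $1$ with $K_1 = (V_A \oplus V_B)/V$. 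I would then assemble a linear $A \sqcap B$-resolution of $(A \sqcap B)/I$ by combining the linear $A$- and $B$-resolutions of $A/J_A$ and $B/J_B$ (both supplied by universal Koszulness) along the degree-$0$ part, with an additional degree-$1$ correction accounting for $K$.

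The hard part will be verifying linearity of the total complexes in the mixed situations, namely the second case of part (1) and part (2) when $V$ fails to be the direct sum $V_A \oplus V_B$. In both settings the candidate resolution arises as a total complex of a double complex, and one must check that every differential raises the internal degree by exactly $1$. This requires careful degree tracking through the tensor and gluing constructions, which I expect to be the main technical content of the argument.
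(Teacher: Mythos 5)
The paper does not actually prove this lemma: it quotes Proposition 30 of \cite{enhanced}, and the standard arguments for such closure properties (going back to \cite{conca}) run through the colon-ideal characterisation of universal Koszulity --- $A$ is universally Koszul if and only if for every ideal $I$ generated in degree $1$ and every $a\in A_1\setminus I_1$ the colon ideal $(I:a)$ is again generated in degree $1$. With that criterion both statements reduce to short ideal-theoretic computations (for $A\sqcap B$ one exploits $A_+B_+=0$ exactly as you do, but at the level of colon ideals rather than resolutions), and no complexes need to be assembled. Your plan of building explicit linear resolutions is therefore a genuinely different and more laborious route. Parts of it are sound: the case $V\subseteq A_1$ of (1) is an Eckmann--Shapiro argument over the free $A$-module $A[x]$, and your identification $I_n=(J_A)_n\oplus(J_B)_n$ for $n\geq 2$ in (2) is correct.

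There are, however, two concrete gaps. First, in the second case of (1) you invoke $A[x]/(a_0+x)\cong A$; computing the ideal gives $(a_0+x)\cap A=Aa_0^2$, so this needs $a_0^2=0$. That is automatic from graded-commutativity when the characteristic is not $2$, but the paper's definition of graded-commutative does not force it in characteristic $2$, where your reduction lands on $A/\bigl((V_0)+(a_0^2)\bigr)$, a quotient by an ideal that is not generated in degree $1$; this case needs a separate treatment (or an added hypothesis, satisfied by Lie algebra cohomology, that degree-one classes square to zero). Second, and more seriously, in (2) the exact sequence $0\to K\to (A\sqcap B)/I\to A/J_A\sqcap B/J_B\to 0$ with $K\cong\F[-1]^d$ does not by itself yield Koszulity of the middle term: the long exact sequence leaves $\ext^{i,i+1}_{A\sqcap B}\bigl((A\sqcap B)/I,\F\bigr)$ controlled by the connecting map $\ext^{i,i+1}(K,\F)\to\ext^{i+1,i+1}(A/J_A\sqcap B/J_B,\F)$, whose injectivity for every $i$ is precisely the content of the lemma --- were the extension split, the middle term would \emph{not} be Koszul. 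Equivalently, the first syzygy module of $V$ in $A\sqcap B$ is a fibre product, not a direct sum, of the syzygies of $V_A$ and $V_B$, and your ``degree-$1$ correction'' must produce free summands killing exactly the diagonal part of $V$ in every homological degree. You correctly flag this as the hard part, but no mechanism is offered, so the argument is incomplete exactly where the difficulty sits; passing to the colon-ideal criterion would close both gaps at once.
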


	\section{Proofs of Theorems A and B}
	The first result of this section allows one to pass from Lie algebras to graded-commutative algebras and vice versa. 
	\begin{lem}\label{dual ue}
		\begin{enumerate}
			\item 	If $A$ is a quadratic graded-commutative algebra, then $A^!$ is the universal envelope of a quadratic Lie algebra.
			\item Conversely, if $\li$ is a quadratic Lie algebra, then $\ul^!$ is a (quadratic) graded-commutative algebra.
		\end{enumerate}
	\end{lem}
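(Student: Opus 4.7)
The plan is to unwind the definitions on both sides, reducing the statement to the familiar fact that quadratic duality interchanges the symmetric and antisymmetric parts of $V\otimes V$, while the passage from a Lie algebra to its universal envelope realises the Lie bracket as an antisymmetric tensor inside $\ten(V)_2$.

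For part (2), I would begin with a presentation $\li = \mc F/(R)$ of the quadratic Lie algebra, where $\mc F$ is the free $\mathbb N$-graded Lie algebra on $\li_1$ and $R \subseteq \mc F_2$. The key identification is $\mc F_2 \cong \Lambda^2(\li_1)$, realised inside $\ten(\li_1) = \mc U(\mc F)$ via $v \wedge w \mapsto v \otimes w - w \otimes v$. Since the universal envelope turns a Lie quotient into the corresponding associative quotient (the two-sided ideal generated by $R$ in $\ten(\li_1)$ already absorbs the Lie brackets), one obtains $\ul = \ten(\li_1)/(R) = Q(\li_1, R)$, with $R$ antisymmetric in $\li_1 \otimes \li_1$. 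Quadratic-dualising yields $\ul^! = Q(\li_1^*, R^\perp)$, and since $R$ is antisymmetric, the orthogonal $R^\perp \subseteq \li_1^*\otimes\li_1^*$ contains $S^2(\li_1^*)$. In $\ul^!$ this forces the degree-$1$ generators to anticommute, i.e.\ $\xi\eta+\eta\xi = 0$ for all $\xi,\eta\in (\ul^!)_1$; since $\ul^!$ is $1$-generated, it is a quotient of the exterior algebra $\exa(\li_1^*)$, hence graded-commutative in every degree.

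For part (1) I would reverse the construction. Write $A = Q(V, W)$ with $V = A_1$. Graded-commutativity in degree $1$ gives $vw+wv = 0$ for all $v,w \in V$, so $S^2(V)\subseteq W$. Assuming $\chr\F \neq 2$, decompose $W = S^2(V)\oplus R$ with $R\subseteq \Lambda^2(V)$; then $W^\perp \subseteq \Lambda^2(V^*)$ consists precisely of the antisymmetric tensors orthogonal to $R$. Letting $\mc F$ denote the free $\mathbb N$-graded Lie algebra on $V^*$, the isomorphism $\mc F_2 \cong \Lambda^2(V^*)$ lets me view $W^\perp$ as a space of quadratic Lie relations; the quadratic Lie algebra $\mf g := \mc F/(W^\perp)$ then satisfies $\mc U(\mf g) = \ten(V^*)/(W^\perp) = Q(V^*, W^\perp) = A^!$, as required.

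The only non-routine point, common to both directions, is the identification of the degree-$2$ part of a free Lie algebra with the antisymmetric tensors inside the tensor algebra, together with the compatibility of this identification with quadratic duality; once this is settled, both statements reduce to essentially tautological verifications. In characteristic $2$ the decomposition $V\otimes V = S^2V \oplus \Lambda^2 V$ breaks down, and I would instead carry the argument directly via the inclusion $S^2(V)\subseteq W$, at the cost of slightly heavier bookkeeping.
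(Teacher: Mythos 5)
Your argument is correct, and it is worth separating the two halves. For part (1) you are doing essentially what the paper does: the paper picks a basis $(\omega_i)$ of the relation space $\Omega\leq\Lambda_2(V^\ast)$ together with dual brackets $[x_i,y_i]$, writes down the Lie presentation $\pres{V}{[v,v']-\sum_i\omega_i(v,v')[x_i,y_i]}$, and compares the two kernels inside $\ten(V)$; your version identifies $W^\perp$ with a subspace of the antisymmetric tensors (the degree-two part of the free Lie algebra on $V^\ast$) and is the same construction in basis-free form. For part (2), however, your route is genuinely different from the paper's: the paper never looks at the relations of $\ul$ but instead invokes Theorem \ref{thm:diagext=dual} to identify $\ul^!$ with the diagonal subalgebra $\bigoplus_i\ext^{ii}_{\ul}(\F,\F)$ of $H^\bu(\li,\F)$ and then appeals to the graded-commutativity of the cup product. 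That is a one-line argument, but it leans on nontrivial cohomological machinery; your direct computation --- the relations of $\ul$ lie in $\Lambda^2(\li_1)$, hence their orthogonal contains the symmetric tensors, hence the generators of $\ul^!$ anticommute and square to zero, and graded-commutativity propagates because the algebra is $1$-generated --- is self-contained, makes the symmetry between the two halves of the lemma visible (quadratic duality exchanging antisymmetric and symmetric tensors), and, unlike the paper, is explicit about where characteristic $2$ intervenes. One small point worth spelling out: to conclude that $\ul^!$ is a quotient of $\exa(\li_1^\ast)$ you need $\xi\otimes\xi\in R^\perp$ in addition to $\xi\otimes\eta+\eta\otimes\xi\in R^\perp$; both hold because $R$ is antisymmetric, but you only state the latter.
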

	\begin{proof}
		(1) Let $A=\exa(V^\ast)/(\Omega)$, where $\Omega\leq \Lambda_2(V^\ast)$. Choose a basis $(\omega_i)$ for $\Omega$, and elements $x_i,y_i\in V$ such that $\omega_i(x_j\otimes y_j)=\delta_{ij}$.
		
		Define the Lie algebra given by the following presentation:
		\[\li=\pres{V}{[v,v']-\sum_i\omega_i(v,v')[x_i,y_i]:\ v,v'\in V}.\]
		We want to show that $\ul$ is isomorphic to $A^!$.
		
		To this end, we present $A$ as a quadratic algebra $A=Q(V^\ast,R)$, where $R=\tilde\Omega\oplus \text{Span}_\F(\alpha\otimes \alpha:\alpha\in V)\leq V^\ast\otimes V^\ast$, and $\tilde \Omega$ is a lifting of $\Omega$ to a subspace of $V^\ast\otimes V^\ast$ with respect to the canonical projection $V^\ast\otimes V^\ast\to V^\ast\wedge V^\ast$.
		
		It is easy to see that $\ker(\ten(V)\to \ul)\subseteq\ker(\ten(V)\to A^!)$, as \[\omega_j\left( (v\otimes v'-v'\otimes v)-\sum_i\omega_i(v,v')(x_i\otimes y_i-y_i\otimes x_i)\right)=0\]
		Also the opposite inclusion holds true.
		
		
		(2) Since $\ul$ is quadratic, by Theorem \ref{thm:diagext=dual}, its dual is isomorphic to the diagonal subalgebra $\bigoplus_i\ext^{ii}_\ul(\F,\F)$ of the cohomology algebra $\ext^\bbu_\ul(\F,\F)=H^\bu(\li,\F)$, which is well-known to be graded-commutative.
	\end{proof}
	
	There follows Theorem A.
	\begin{thm}{\label{bk uk}}
		Let $\li$ be a graded Lie algebra with cohomology algebra $A=H^\bu(\li,\F)$.
		Then $\li$ is Bloch-Kato if, and only if, $A$ is universally Koszul.
		\begin{proof}

			(1) Let $\li$ be a Bloch-Kato Lie algebra. If $I_1\leq A_1=\li_1^\ast$, consider the $1$-generated Lie subalgebra \[\mc M=\pres{m\in\li_1}{i(m)=0,\ \forall i\in I_1}=\gen{I_1^\perp}.\]
			Since $\li$ is Bloch-Kato, the Lie algebra $\mc M$ is Koszul, and $H^\bu(\mc M,\F)\simeq \mc U(\mc M)^!$.
			
			By definition, $\res:H^\bu(\li,\F)\to H^\bu(\mc M,\F)$ is surjective and \[\ker \res_1=\set{\alpha\in\li_1^\ast}{\alpha\vert_{\mc M_1}=0}=I_1.\]
			Therefore, there is an ideal $J=I(I_1)\lhd A$ such that \[H^\bu(\li,\F)/I(I_1)\simeq B=H^\bu(\mc M,\F)\] is Koszul and $I(I_1)_1=I_1$. We want to show that $J$ is generated by $I_1$ as an ideal of $A$.

			Now, the projection $A\to B$ is Koszul, by Proposition \ref{lem:5.9}, since $A^!=\ul$ is a free right $B^!$-module, for $B^!=\mc U(\mc M)$ and the PBW Theorem. Thus, $B$ is a Koszul $A$-module.
			
			The exact sequence of $A$-modules $0\to J\to A\to B\to 0$ induces a long exact sequence involving the $\ext$ functor, that is,
			\[\ext^{0,j}_A(A,\F)\to\ext^{0,j}_A(J,\F)\to\ext^{1,j}_A(B,\F)\to\ext^{1,j}_A(A,\F)\]
			Notice that $J_0=0$, for $B_0\simeq A_0$.
			
			For $B$ being a Koszul $A$-module it means that $\ext^{\bu,\bu}_A(B,\F)$ is concentrated on the diagonal, and hence the long exact sequence shows that $J$ is generated in degree $1$, namely $J=AI_1$.
			(More precisely $\ext^{i,j}_A(J,\F)=0$ for $j\neq i+1$, and $J$ is a Koszul $A$-module.)
			
			(2) Suppose $A$ is a universally Koszul graded-commutative algebra. 
			Then $A$ is Koszul and $A^!=\mc U(\mc A)$ is the universal envelope of a Koszul Lie algebra $\mc A$ generated by $A_1^\ast$, by Lemma \ref{dual ue}.
			Let $V\leq A_1^\ast$, and $\mc B=\gen{V}$ be the Lie subalgebra of $\mc A$ generated in degree $1$ by $V$.
			Let $I$ be the ideal of $A$ generated by $V^\perp\leq A_1$ in degree $1$.
			
			Thus $A/I$ is Koszul since $A$ is universally Koszul, and $(A/I)^!$ is a quadratic subalgebra of $A^!$ by Lemma \ref{lem:5.9}.
			It follows that $(A/I)^!=\mc U(\mc N)$ for some (quadratic) Lie algebra $\mc N$ generated by $(A_1/I_1)^\ast=V$. 
			But, by Lemma \ref{lem:5.9}, $\mc N$ is a Lie subalgebra of $\mc A$, and hence, $\mc N=\mc B$, as they have the same generating space. 
			Thus, $\mc B$ is Koszul and \[H^\bu(\mc B,\F)\simeq A/I.\]
			
			Eventually, notice that since $A=H^\bu(\li,\F)$ is $1$-generated, $\mc U(\li)$ is a Koszul algebra, and hence $A^!=\mc U(\li)$, namely $\mc A=\li$.
		\end{proof}
	\end{thm}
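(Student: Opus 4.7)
My plan is to exploit the quadratic duality established in Lemma \ref{dual ue} to translate between $1$-generated Lie subalgebras of $\li$ and degree-$1$ generated ideals of $A$. Since a Bloch-Kato Lie algebra is in particular Koszul, and conversely a universally Koszul algebra is Koszul, both sides of the equivalence already force $A$ quadratic with $A^! \simeq \ul$; this sets up an orthogonality correspondence $V \leftrightarrow V^\perp$ between subspaces of $\li_1$ and of $A_1 = \li_1^\ast$, which I will upgrade to a correspondence between the $1$-generated subalgebra $\mc M = \gen{V}$ and the ideal $(V^\perp) \lhd A$.

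For the forward implication, given $I_1 \leq A_1$, I would set $\mc M = \gen{I_1^\perp}$. By hypothesis $\mc M$ is Koszul, so $H^\bu(\mc M,\F) \simeq \mc U(\mc M)^!$, and the restriction map $\res : A \to H^\bu(\mc M,\F)$ is a surjection of graded algebras whose degree-$1$ kernel is exactly $I_1$. To conclude that $A/(I_1)$ itself is Koszul, I would invoke Lemma \ref{lem:5.9}: since $\ul$ is free as a right $\mc U(\mc M)$-module by PBW, the algebra map $A \to H^\bu(\mc M,\F)$ is a Koszul homomorphism, hence the quotient $B = H^\bu(\mc M,\F)$ is a Koszul $A$-module. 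Feeding $0 \to \ker \res \to A \to B \to 0$ into the long exact sequence of Theorem \ref{thm:longext} and using that $\ext^{ij}_A(B,\F)$ is concentrated on the diagonal forces $\ker \res$ to be generated in degree $1$, i.e.\ $\ker \res = (I_1)$. This gives universal Koszulity of $A$.

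For the reverse implication, assume $A$ is universally Koszul. Given $V \leq \li_1$, set $\mc M = \gen{V}$, let $I$ be the ideal of $A$ generated by $V^\perp \leq A_1$, and note $A/I$ is Koszul by hypothesis. By Lemma \ref{lem:5.9} applied to the Koszul morphism $A \to A/I$, the quadratic dual $(A/I)^!$ embeds as a quadratic subalgebra of $A^! = \ul$. By Lemma \ref{dual ue}, $(A/I)^!$ is the universal envelope of a Koszul Lie algebra $\mc N$ generated by $(A_1/I_1)^\ast \cong V$. Since $\mc N$ and $\mc M$ both sit inside $\li$ and are generated by the same subspace $V$, they coincide, so $\mc M$ is Koszul. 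As $V$ was arbitrary, $\li$ is Bloch-Kato.

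The main obstacle I expect is the degree-of-generation step in the forward direction: showing that $\ker \res$ is actually generated by its degree-$1$ part, rather than merely containing $I_1$. Everything rests on upgrading $\res$ to a Koszul homomorphism via Lemma \ref{lem:5.9}, which in turn requires the PBW freeness of $\ul$ over $\mc U(\mc M)$; pinning down this chain of dualities correctly, so that the quadratic dual of the restriction map really does coincide with the inclusion $\mc U(\mc M) \hookrightarrow \ul$, is the delicate bookkeeping that drives the proof.
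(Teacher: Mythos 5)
Your proposal is correct and follows essentially the same route as the paper: in the forward direction it uses the surjectivity of restriction onto $H^\bu(\mc M,\F)$, Lemma \ref{lem:5.9} together with PBW freeness of $\ul$ over $\mc U(\mc M)$ to make $A\to B$ a Koszul homomorphism, and the long exact sequence plus diagonal concentration of $\ext^\bbu_A(B,\F)$ to see that $\ker\res$ is generated in degree $1$; in the reverse direction it uses the same dualization of $A\to A/I$ via Lemma \ref{lem:5.9} and the identification $\mc N=\mc M$ by equality of generating spaces. No substantive differences from the paper's argument.
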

	
	Since the direct sum of universally Koszul algebra is universally Koszul, by Lemma \ref{lem:pol ring}, the proof of Theorem B follows immediately.
	\begin{thm}\label{et=>bk}
		The class of Bloch-Kato Lie algebras is closed under taking free products.
		\begin{proof}
			Let $\mc A$ and $\mc B$ be two Bloch-Kato $\F$-Lie algebras. Then their cohomology rings $A=H^\bu(\mc A,\F)$ and $B=H^\bu(\mc B,\F)$ are universally Koszul by Theorem A.
			Now, $H^\bu(\mc A\amalg\mc B,\F)=A\sqcap B$ by Lemma \ref{cohom free prod}, and hence it is universally Koszul by Lemma \ref{lem:pol ring}. 
			Therefore, $\mc A\amalg \mc B$ is Bloch-Kato by Theorem A.\end{proof}
	\end{thm}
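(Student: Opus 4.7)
The plan is to reduce the statement to a purely algebraic (commutative) fact via the correspondence provided by Theorem A, and then to invoke the fact that universal Koszulity is preserved under the direct-sum (categorical product) operation on graded-commutative connected algebras. Concretely, given two Bloch-Kato Lie algebras $\mc A$ and $\mc B$, I will pass to their cohomology algebras, combine them using what is known about the cohomology of a free product, and return to the Lie-algebra side.

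First I would apply Theorem \ref{bk uk} (Theorem A) to each factor: the hypothesis that $\mc A$ and $\mc B$ are Bloch-Kato translates to the statement that $A := H^\bu(\mc A,\F)$ and $B := H^\bu(\mc B,\F)$ are universally Koszul graded-commutative $\F$-algebras. Next I would identify the cohomology algebra of the free product. By Lemma \ref{cohom free prod}, one has an isomorphism of graded algebras
\[
H^\bu(\mc A \amalg \mc B, \F) \;\simeq\; A \sqcap B,
\]
where $\sqcap$ denotes the categorical product in the category of graded-commutative connected algebras. This is exactly the place where one invokes the Mayer--Vietoris-type short exact sequence from Theorem \ref{thm:mayervietoris}, so that the computation on cohomology becomes purely formal.

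Then I would appeal to Lemma \ref{lem:pol ring}(2), which says that the direct sum $A \sqcap B$ of two universally Koszul graded-commutative algebras is again universally Koszul. Applied to the present situation, this immediately yields that $H^\bu(\mc A \amalg \mc B, \F)$ is universally Koszul.

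Finally I would run Theorem \ref{bk uk} in the reverse direction: since the cohomology ring of $\mc A \amalg \mc B$ is universally Koszul, the Lie algebra $\mc A \amalg \mc B$ must itself be Bloch-Kato, completing the proof. Everything in this argument is formal once the three ingredients (Theorem A, the cohomological decomposition for free products, and closure of universal Koszulity under $\sqcap$) are available; the only conceptually non-trivial step is Lemma \ref{cohom free prod}, whose proof rests on the Mayer--Vietoris sequence, and Lemma \ref{lem:pol ring}(2), which is the combinatorial heart of the matter. I do not expect any new obstacle beyond those already handled by the preceding sections.
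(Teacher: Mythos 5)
Your proposal is correct and follows exactly the same route as the paper's own proof: pass to cohomology via Theorem A, identify $H^\bu(\mc A\amalg\mc B,\F)$ with $A\sqcap B$ via Lemma \ref{cohom free prod}, invoke Lemma \ref{lem:pol ring}(2) for closure of universal Koszulity under $\sqcap$, and apply Theorem A in reverse. No differences worth noting.
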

	
	\section{Kurosh subalgebra theorem}
	
	In the realm of (pro-$p$) groups, the Kurosh theorem gives the structure for an arbitrary subgroup of the free product of groups. This result is an immediate consequence of the Bass-Serre theory for groups acting on trees, and bacause of this geometric setting, its proof appears very elegant.
	
	Unfortunately, for the (ungraded) Lie algebras such a result is not available, and indeed it is false (see \cite{shir}.)
	However, for the positively-graded case, things behave better, and more similar to the group case, even though a complete Bass-Serre theory is not available; in fact, there is no action of a Lie algebra on graphs. 
	
	To this end we begin by proving a Nielsen-Schreier theorem for Lie algebras. It can be seen as a Kurosh' theorem for free $\mathbb N$-graded Lie algebras freely generated in degree $1$. 
	\begin{thm}\label{NS}
		Let $\mc M \subseteq \li\langle V\rangle$ be a $1$-generated subalgebra of the free $\F$-Lie algebra $\li\langle V\rangle$, $V$ an $\F$-vector space. Then $\mc M \simeq \li\gen{\mc M_1}$ is a free Lie algebra.
		\begin{proof}
			It suffices to show that the canonical $\F$-Lie algebra homomorphism $\phi:\li\gen{\mc M_1}\to \mc M $ is injective. 
			Let $\mc X$ be a basis for $V$ such that $\mc X\cap\mc M_1$ is a basis for $\mc M_1$. 
			Let $\mc B $ be the Hall-basis made with respect to an ordering on $\mc X$, and let $\mc B ^{\mc M}$ be the subset of $\mc B $ made up from the basis $\mc M_1\cap \mc X$ with the induced ordering. 
			Then $\text{Span}_\F(\mc B ^{\mc M})=\text{im}(\phi)$, and as $\mc B ^{\mc M}\subseteq \mc B $, one concludes that $\mc B ^{\mc M}$ is a linearly independent set. 
			Thus $\phi$ must be injective.
		\end{proof}
	\end{thm}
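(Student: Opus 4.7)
The plan is to show that $\mc M$ has vanishing cohomology in degrees $\geq 2$ --- hence is free by the standard cohomological criterion for positively graded Lie algebras --- and then to identify its free generating space with $\mc M_1$. This route reuses the machinery already developed in the paper and avoids any direct syntactic manipulation of brackets.

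The key observation to exploit is that, by the Poincar\'e--Birkhoff--Witt theorem, $\mc U(\li\langle V\rangle)=\ten(V)$ is a free right $\mc U(\mc M)$-module for any graded Lie subalgebra $\mc M\subseteq\li\langle V\rangle$. The Eckmann--Shapiro Lemma (Theorem \ref{shap}) then yields
\[H^i(\mc M,\F)=\ext^i_{\mc U(\mc M)}(\F,\F)\simeq\ext^i_{\ten(V)}\bigl(\ten(V)\otimes_{\mc U(\mc M)}\F,\,\F\bigr).\]
Since $\ten(V)$ is the tensor algebra on $V$, the short resolution $0\to\ten(V)\otimes V\to\ten(V)\to\F\to 0$ shows its global dimension is at most $1$, so the right-hand side vanishes for every $i\geq 2$. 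Hence $H^i(\mc M,\F)=0$ for all $i\geq 2$.

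I would then apply the standard criterion that a positively graded Lie algebra with $H^2(-,\F)=0$ is free (the result referenced as \ref{freecoh} in the paper), concluding that $\mc M$ is a free Lie algebra. To pin down its generating set, observe that for any $1$-generated graded Lie algebra the abelianisation $\mc M/[\mc M,\mc M]$ is concentrated in degree $1$ and coincides there with $\mc M_1$; thus a free generating set for $\mc M$ can be chosen to be any basis of $\mc M_1$, and the canonical surjection $\li\gen{\mc M_1}\to\mc M$ sends one free basis bijectively to another, proving the required isomorphism. The one slightly delicate point is the Eckmann--Shapiro reduction, which depends on PBW giving $\ten(V)$ the structure of a free right $\mc U(\mc M)$-module --- a routine consequence of PBW once a homogeneous basis of $\mc M$ is extended to one of $\li\langle V\rangle$. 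A purely combinatorial alternative would be to compare Hall bases of $\li\langle V\rangle$ and $\li\gen{\mc M_1}$ with respect to an ordering of a basis of $V$ in which a basis of $\mc M_1$ comes first, but the cohomological route is shorter and better integrated with the tools of the paper.
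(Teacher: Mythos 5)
Your proof is correct, but it takes a genuinely different route from the paper's. The paper argues combinatorially: extend a basis of $\mc M_1$ to a basis $\mc X$ of $V$, build the Hall basis $\mc B$ of $\li\langle V\rangle$ on $\mc X$, and observe that the Hall basis $\mc B^{\mc M}$ built on $\mc M_1\cap\mc X$ is a subset of $\mc B$, hence linearly independent, so the canonical map $\li\gen{\mc M_1}\to\mc M$ is injective. You instead go through cohomology: PBW makes $\ten(V)$ a free right $\mc U(\mc M)$-module, Eckmann--Shapiro identifies $H^i(\mc M,\F)$ with $\ext^i_{\ten(V)}(\ten(V)\otimes_{\mc U(\mc M)}\F,\F)$, and hereditariness of the tensor algebra kills this for $i\geq 2$, so Theorem \ref{freecoh} applies. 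This is precisely the alternative the paper itself flags in the remark following the theorem (``one may see Theorem \ref{NS} as a direct consequence of Theorem \ref{freecoh}'') while deliberately opting for the combinatorial argument to keep \ref{NS} independent of the cohomological machinery; there is no circularity in your version, since the proof of \ref{freecoh} does not invoke \ref{NS}. Two small points: passing from ``$\mathrm{pd}_{\ten(V)}\F\leq 1$'' to ``global dimension $\leq 1$'' uses the standard fact that for a connected graded algebra these coincide on bounded-below graded modules (or, more directly, every graded $\ten(V)$-module $M$ has the length-one free resolution $0\to\ten(V)\otimes V\otimes M\to\ten(V)\otimes M\to M\to 0$), which you should cite or state; and your closing discussion of the abelianisation is redundant, since Theorem \ref{freecoh} already delivers the isomorphism $\mc M\simeq\li\gen{\mc M_1}$ in exactly the required form.
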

	We also prove the following characterization of the free Lie algebras, that the analogue of Stallings-Swan theorem for groups of cohomological dimension $1$.
	\begin{thm}{\label{freecoh}}
		Let $\li$ be a $1$-generated $\mathbb N$-graded $\F$-Lie algebra. Then $H^2(\li ,\F)=0$ if, and only if, $\li \simeq \li\gen{\li_1}$ is a free $\F$-Lie algebra.
		\begin{proof}
			If $\li $ is free, then $H^2(\li ,\F)=0$, since $0\to \mc U (\li )\otimes \li_1\to \mc U (\li )\to \F\to 0$ is a projective resolution of the trivial $\mc U (\li )$-module $\F$. \\
			Suppose $H^2(\li ,\F)=0$. It suffices to show that the canonical map $\phi :\li \gen{\li_1}\to \li $ is injective. Let $I =\ker(\phi )$, and put $\mc F =\li \gen{\li_1}$. Then the inflation-restriction sequence (namely, the $5$-term sequence induced by the Hochschild-Serre spectral sequence) associated with the exact sequence of $\mathbb N$-graded $\F$-Lie algebras $0\to I \to\mc F \to \li \to 0$ yields 
			\begin{equation}
				\label{5term}
				\xymatrix{
					0\ar[r]&H^1(\li ,\F)\ar[r]^{H^1(\phi )}&H^1(\mc F ,\F)\ar[r]^\alpha&
					H^1(I ,\F)^{\li }\ar[d]\\
					&&H^2(\mc F ,\F)&H^2(\li ,\F)\ar[l]\\}
			\end{equation}
			where the action of $\li $ on the $1$-cocycles $\alpha:I \to \F$ is given by $x\cdot\alpha(i)=\alpha([\tilde x,i])$, $\tilde x$ being any lift of $x\in \li $ to $\mc F $. 
			By construction, $H^1(\phi )$ is an isomorphism, and $H^2(\li ,\F)=0$, by hypothesis. 
			This implies that $H^1(I ,\F)^{\li }=0$. 
			As $I $ is an $\mathbb N$-graded ideal of $\mc F $, one has $H^1(I ,\F)=\hom_{\mbox{\small{Lie}}}(I ,\F)=\hom_{\F}(I /[I ,I ],\F)$. 
			Thus, $H^1(I ,\F)^{\li }=0$ implies $I /[\mc F ,I ]=0$, and therefore $I =[\mc F ,I ]$. 
			Now, suppose $n=\min\set{m\geq 0}{I_m\neq 0}$ is finite. One has $I_n=[I ,\mc F ]_n=\sum_{1\leq j<n}[I_j,\mc F_{n-j}]=0$, since $I $ is a graded ideal, whence $I =0$, and $\phi $ is injective.
		\end{proof}
	\end{thm}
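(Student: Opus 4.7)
My plan is to prove the theorem in two directions, with the substance lying in the reverse implication. For the forward direction, if $\li$ is free on $\li_1$, then $\ul$ is the tensor algebra $\ten(\li_1)$, and the trivial module admits the standard length-$1$ projective resolution
\[0 \to \ul \otimes_\F \li_1 \to \ul \to \F \to 0,\]
which immediately gives $H^i(\li,\F) = 0$ for all $i \geq 2$.

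For the reverse direction, I would fix the canonical surjection $\phi: \mc F \to \li$, where $\mc F = \li\gen{\li_1}$ is the free Lie algebra on $\li_1$ and $\phi$ is the identity on the generating space. Setting $I = \ker\phi$, my goal is to show $I = 0$. Applying the Hochschild-Serre five-term exact sequence in cohomology to the graded extension $0 \to I \to \mc F \to \li \to 0$ yields
\[0 \to H^1(\li,\F) \to H^1(\mc F,\F) \to H^1(I,\F)^\li \to H^2(\li,\F) \to H^2(\mc F,\F).\]
By Proposition \ref{prop:gen rel}, $H^1$ of any $1$-generated graded Lie algebra is canonically dual to its degree-$1$ piece, so both $H^1(\li,\F)$ and $H^1(\mc F,\F)$ identify with $\li_1^\ast$, and $H^1(\phi)$ becomes the identity. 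Combined with $H^2(\li,\F) = 0$ from the hypothesis, this forces $H^1(I,\F)^\li = 0$.

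Finally, since $I$ is an ideal of $\mc F$, the standard cocycle description gives $H^1(I,\F) \simeq \hom_\F(I/[I,I],\F)$ with the natural $\li$-action coming from the adjoint action of $\mc F$ on $I$. The vanishing of the $\li$-invariants translates into $I = [\mc F, I]$. The grading then closes the argument: if $I \neq 0$ and $n$ is the least degree with $I_n \neq 0$, one would have $I_n = \sum_{0 < j < n}[\mc F_{n-j}, I_j] = 0$, a contradiction. I expect the only nontrivial points to be the identification of $H^1(\phi)$ as an isomorphism, which is a direct consequence of the degree-$1$ characterization of generators via $\ext^{1,\bu}$, and the invocation of the graded Hochschild-Serre machinery, which I would take as standard for $\mathbb N$-graded Lie algebras; the hypothesis of positive grading is precisely what makes the final minimal-degree step go through, distinguishing the statement from its (false) ungraded counterpart.
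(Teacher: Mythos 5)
Your proposal is correct and follows essentially the same route as the paper: the length-one free resolution for the forward direction, and the five-term inflation--restriction sequence applied to $0\to I\to\mc F\to\li\to 0$, followed by $I=[\mc F,I]$ and the minimal-degree argument, for the converse. The only (welcome) addition is that you spell out why $H^1(\phi)$ is an isomorphism via the identification of $H^1$ with the dual of the degree-one generating space, which the paper leaves as ``by construction.''
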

	Notice that the cohomological dimension of subagebras does not exceed that of the Lie algebra. In particular, one may see Theorem \ref{NS} as a direct consequence of Theorem \ref{freecoh}. However, we decided to give the above proof that is purely combinatorial and it does not involve any cohomological arguement.
	
	In order to prove the Kurosh Theorem we first need to understand the algebra generated by some subalgebras of the factors $\mc A$ and $\mc B$ in the free product $\mc A\amalg\mc B$. The following 
	\begin{prop}{\label{subfreeproduct}}
		Let $\mc A$ and $\mc B$ be $\mathbb N$-graded $1$-generated $\F$-Lie algebras, and let $\mc H\subseteq \mc A\amalg\mc B$ be an $\mathbb N$-graded $1$-generated subalgebra of their free product, such that $\mc H=\gen{\mc H_1\cap\mc A,\mc H_1\cap \mc B}$. Then \[\mc H=\gen{\mc H_1\cap\mc A}\amalg\gen{\mc H_1\cap \mc B}.\]
		\begin{proof}
			Let $\mf X$ be a graded basis for $\mc H\cap\mc A$, and $\mf Y$ be a graded basis for $\mc H\cap\mc B$. Extend $\mf X$ and $\mf Y$ to graded bases $\mf X'$ for $\mc A$ and $\mf Y'$ for $\mc B$, respectively. Now, for every $x_1,x_2\in \mf X'$ there exist scalars $c_{x_1,x_2,z}\in \F$ such that \[[x_1,x_2]=\sum_{z\in \mf X'}c_{x_1,x_2,z}z.\]
			Similarly, for every $y_1,y_2\in\mf Y'$, one can write \[[y_1,y_2]=\sum_{w\in \mf Y'}d_{y_1,y_2,w}w.\]
			Define 
			\begin{equation}
				J=\left([x_1,x_2]-\sum_{z\in \mf X}c_{x_1,x_2,z}z,\ [y_1,y_2]-\sum_{w\in \mf Y}d_{y_1,y_2,w}w\ \vert\ x_i\in \mf X,\ y_j\in \mf Y\right)\end{equation}
			as an ideal of $ \li(\mf X\sqcup\mf Y)$, and
			\begin{equation}
				J'=\left([x_1,x_2]-\sum_{z\in \mf X'}c_{x_1,x_2,z}z,\ [y_1,y_2]=\sum_{w\in \mf Y'}d_{y_1,y_2,w}w\vert\ x_i\in \mf X',\ y_j\in \mf Y'\right)\end{equation} as an ideal of $\li(\mf X'\sqcup\mc Y')$.
			It is clear that $J=\li(\mf X\sqcup\mc Y)\cap J'$, where one considers the free Lie algebra as a subset of the free associative algebra.
			Now, the canonical epimorphism $\li(\mf X'\sqcup\mf Y')\to \mc A\amalg\mc B$ has kernel $J'$, and thus its restriction to $\li(\mf X\sqcup\mf Y)$ has kernel $J$, whence we get an isomorphism $\mc H\overset{\sim}{\to} \gen{\mc H_1\cap\mc A}\amalg\gen{\mc H_1\cap \mc B}.$
		\end{proof}
	\end{prop}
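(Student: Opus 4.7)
The plan is to construct the canonical Lie algebra homomorphism
\[\phi\colon \gen{\mc H_1\cap\mc A}\amalg\gen{\mc H_1\cap\mc B}\longrightarrow\mc H\]
from the universal property of the free product applied to the inclusions of $\gen{\mc H_1\cap\mc A}$ and $\gen{\mc H_1\cap\mc B}$ into $\mc H$, and then show it is an isomorphism. Surjectivity is immediate since its image contains a generating set of $\mc H$ by hypothesis; the real content is injectivity.

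For this, I would pass to universal envelopes. Because the functor $\mc U$ is a left adjoint between Lie and associative algebras, it preserves coproducts, so $\mc U(\mc A\amalg\mc B)\simeq\mc U(\mc A)\amalg\mc U(\mc B)$ in the category of connected $\mathbb N_0$-graded $\F$-algebras (as recalled just before the PBW theorem in the section on free products), and analogously for the primed versions. By PBW, the inclusions $\gen{\mc H_1\cap\mc A}\hookrightarrow\mc A$ and $\gen{\mc H_1\cap\mc B}\hookrightarrow\mc B$ lift to inclusions of graded universal envelopes. Using the explicit block decomposition recalled in the same section, where $(A\amalg B)_n$ is a direct sum of alternating tensor blocks $A_{i_1}\otimes B_{j_1}\otimes A_{i_2}\otimes\cdots$, and the fact that tensor products preserve injections of graded vector spaces in each homogeneous degree, the induced map
\[\mc U\bigl(\gen{\mc H_1\cap\mc A}\bigr)\amalg\mc U\bigl(\gen{\mc H_1\cap\mc B}\bigr)\longrightarrow\mc U(\mc A)\amalg\mc U(\mc B)\]
is injective.

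Since Lie algebras embed in their universal envelopes over a field (PBW again), and the square relating $\phi$ to the above map of envelopes commutes on the generators $\mc H_1\cap\mc A$ and $\mc H_1\cap\mc B$, the composition
\[\gen{\mc H_1\cap\mc A}\amalg\gen{\mc H_1\cap\mc B}\xrightarrow{\phi}\mc H\hookrightarrow\mc U(\mc A\amalg\mc B)\simeq\mc U(\mc A)\amalg\mc U(\mc B)\]
coincides with the injection constructed in the previous paragraph. Therefore $\phi$ must itself be injective, giving the desired isomorphism $\mc H\simeq\gen{\mc H_1\cap\mc A}\amalg\gen{\mc H_1\cap\mc B}$.

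The main obstacle is the functoriality of the associative free-product block decomposition along graded subalgebra inclusions: once one commits to the explicit block description, one has to verify that tensor products of inclusions remain inclusions in each homogeneous degree and that the multiplication (concatenation followed by in-factor products) restricts correctly. This is routine but needs some bookkeeping; as an alternative, one could argue combinatorially via bases of the free Lie algebras (using the free associative envelope to compare ideals of relations), essentially the route taken in the printed proof.
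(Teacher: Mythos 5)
Your proof is correct, and it takes a genuinely different route from the one printed in the paper. The paper argues combinatorially at the level of presentations: it fixes graded bases, writes out the multiplication-table relations, forms the ideals $J\lhd \li(\mf X\sqcup\mf Y)$ and $J'\lhd\li(\mf X'\sqcup\mf Y')$, and reduces everything to the identity $J=\li(\mf X\sqcup\mf Y)\cap J'$ (which is asserted rather tersely and is really the heart of that argument). You instead construct the canonical comparison map $\phi$ from the universal property, get surjectivity for free from the generating hypothesis, and obtain injectivity by passing to universal envelopes: $\mc U$ preserves coproducts as a left adjoint, PBW embeds $\mc U(\gen{\mc H_1\cap\mc A})$ into $\mc U(\mc A)$ (and the Lie free product into its envelope), and the alternating tensor-block decomposition of the associative free product shows that a free product of injections of connected graded algebras is injective, since tensor products and direct sums of injections of vector spaces over a field are injective. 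The two key facts you invoke (graded PBW and the block description of $A\amalg B$) are both already set up in the paper, so nothing external is needed; the remaining bookkeeping you flag --- that the induced map on free products is block-diagonal with respect to the decomposition --- is indeed routine, because on a pure tensor $a_1\otimes b_1\otimes\cdots$ the induced map is forced by multiplicativity to act factorwise. Your approach buys a cleaner, functorial argument that sidesteps the delicate intersection-of-ideals claim, and it is essentially the ``less combinatorial proof'' that the paper itself anticipates in the remark following the Kurosh theorem; the paper's version, in exchange, stays entirely inside the language of Lie algebra presentations.
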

	
	The latter two results are the main ingredients for the Kurosh subalgebra theorem.
	
	\vspace{1cm}
	
	Let $V$ be a $\F$-subspace of the direct sum $A\oplus B$ of two finite dimensional $\F$-vector spaces $A$ and $B$. 
	Then one can find a distinguished basis $\mathcal B=\mathcal B_A\cup\mathcal B_B\cup\tilde{\mathcal B}$ for $V$, such that $\mathcal B_A$ is a basis for $V\cap A$, $\mathcal B_B$ is a basis for $V\cap B$, and $\tilde{\mathcal B}$ satisfies the following property: 
	The projections $\pi_A(x)$, $x\in \tilde{\mathcal B}$, and the set $\mathcal B_A$ form a linearly independent set in $A$ (and similarly for $B$).
	Now, since $\pi_A(\tilde{\mathcal B})\cup \mathcal B_A$ is a linearly independent set in $A$, one can thus complete this set to a basis $\mathcal B'_A$ for the whole $A$.
	
	Indeed, let the set $\tilde{\mathcal B}=\{c_i=a_i'+b_i'\}$ ($a_i'\in A\setminus\{0\},b_i'\in B\setminus\{0\}$) complete the set $\mathcal B_A\cup \mathcal B_B$ to a basis for $V$. Suppose that there are scalars $\alpha_i,\beta_j\in\F$, such that $\sum_i\alpha_ia_i'+\sum_j\beta_j a_j=0$. Hence the element $\sum_i\alpha_i c_i+\sum_j\beta_j a_j=\sum_i\alpha_i b_i$ lies in $V\cap B$. Since $\mbox{Span}(\mathcal B_B,\mathcal B_A)\cap \mbox{Span}(\tilde{\mathcal B})=0$, one has $\alpha_i=0$, and thus $\beta_j=0$. In case $V\cap A=0$, one can set $\mathcal B_A=\emptyset$.\\
	The above discussion also holds for $B$.

	Let us now specialize the previous consideration for the $1$-generated subalgebras of the free product $\mc A\amalg\mc B$ of two $1$-generated $\mathbb N$-graded $\F$-Lie algebras $\mc A$ and $\mc B$. 
	
	Let $W\leq \mc A_1\oplus\mc B_1$ be a subspace such that $W\cap\mc A_1=W\cap\mc B_1=0$. Let $(c_i)$ be a basis for $W$. One can write $c_i=a_i+b_i$ in a unique way, such that $a_i\in\mc A_1$ and $b_i\in \mc B_1$. Moreover, $(a_i)$ is a linearly independent set in $\mc A_1$, and $(b_i)$ is a linearly independent set in $\mc B_1$. Consider the envelopes $A=\mc U(\mc A)$ and $ B=\mc U(\mc B)$. Therefore, \[\mc U(\mc A\amalg\mc B)=A\amalg B=\F\oplus\overbrace{ A_1\oplus B_1}^{\text{degree}\ 1}\oplus \overbrace{A_2\oplus (A_1\otimes B_1)\oplus (B_1\otimes A_1)\oplus B_2}^{\text{degree}\ 2}\oplus \dots\]
	If an arbitrary element $\sum_{i,j}\alpha_{ij}c_ic_j$ of $\gen{W}_2$ lies in $A$, then $\sum_{i,j}\alpha_{i,j}(a_ib_j+b_ia_j)\in A_2$. But the latter belongs to a complementary space of $A_2$ in $(A\amalg B)_2$, namely $A_1\otimes B_1\oplus B_1\otimes A_1$, and thus $\alpha_{ij}=0$.
	The same holds for greater degrees. 
	
	Let $x=\sum_{\vert I\vert=n}\alpha_Ic_I\in \gen{W}_{\F-\text{alg}}\cap A_n$, where $c_I=c_{i_1}\cdots c_{i_n}$, for $I=(i_1,\dots,i_n)$, and $\alpha_I\in\F$. 
	Note that, if $n$ is even, then the elements $a_{i_1}b_{i_2}\dots a_{i_{n-1}}b_{i_n}$ $(I=(i_1,\dots,i_n))$ are linearly independent in \[\overbrace{A_1\otimes B_1\otimes \dots \otimes A_1\otimes B_1}^{n\text{ terms}}\] and thus $\alpha_I=0$, for all $I$. For $n$ odd, the same holds. 
	We have thus proven
	\begin{prop}{\label{nullinters}}
		Let $\mc A$ and $\mc B$ be $1$-generated $\mathbb N$- graded $\F$-Lie algebras, and let $\mc H\leq \mc A\amalg \mc B$ be a $1$-generated Lie subalgebra.
		Then the following are equivalent: 
		\begin{enumerate}
			\item $\mc H_1\cap\mc A_1=0$, and 
			\item $\mc H\cap \mc A=0$.
		\end{enumerate}
	\end{prop}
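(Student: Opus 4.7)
The direction $(2)\Rightarrow(1)$ is immediate, since $\mc H_1\cap\mc A_1$ embeds into $\mc H\cap\mc A$. For the converse, my plan is to pass to universal envelopes and prove the stronger statement that $\mc U(\mc H)_n\cap A_n=0$ for every $n\geq 1$, working inside $\mc U(\mc A\amalg\mc B)=A\amalg B$ and exploiting its alternating block decomposition. The case $n=1$ is literally hypothesis $(1)$, so the substance lies in $n\geq 2$.

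Using $(1)$ and the basis-completion recipe recalled in the paragraph preceding the proposition, I pick a basis $\mathcal{B}_B\cup\tilde{\mathcal{B}}$ of $\mc H_1$ with $\mathcal{B}_B=\{y_k\}$ a basis of $\mc H_1\cap\mc B_1$ and $\tilde{\mathcal{B}}=\{c_i=a_i+b_i\}$ such that $\{a_i\}$ is linearly independent in $\mc A_1$ and $\mathcal{B}_B\cup\{b_i\}$ is linearly independent in $\mc B_1$. Given $x\in\mc U(\mc H)_n\cap A_n$ for some $n\geq 2$, I write $x=\sum_w\alpha_w w$ with $w$ ranging over length-$n$ associative words in $\{c_i,y_k\}$ viewed inside $A\amalg B$. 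After expanding each $c_i=a_i+b_i$, I project onto the single-letter alternating summand $\mc A_1\otimes\mc B_1\otimes\mc A_1\otimes\cdots$ of type $ABAB\cdots$: only words with odd-position letters in $\{c_i\}$ contribute, and their contributions are elementary tensors with odd slots in $\{a_i\}$ and even slots in $\mathcal{B}_B\cup\{b_i\}$, hence linearly independent. Vanishing of this projection (forced by $x\in A_n$) kills $\alpha_w$ for every such $w$. The mirror argument for the pattern $BABA\cdots$ kills $\alpha_w$ whenever all even-position letters of $w$ lie in $\{c_i\}$, so combining, every surviving $w$ must carry a $y_k$ at both an odd and an even position.

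To finish, I observe that any word $w$ containing a $y_k$ projects to zero in the pure-$A_n$ summand, since $y_k\in\mc B_1\subset B_+$ forces a $B_+$-factor in every expansion of $w$. Therefore $\pi_{A_n}(x)=\sum_w\alpha_w\pi_{A_n}(w)=0$, and since $x\in A_n$ equals its own $A_n$-projection, $x=0$. The main technical point is the linear-independence claim for the alternating tensors used in the two projection arguments: it is exactly what the basis preparation under hypothesis $(1)$ delivers, and without $(1)$ the set $\mathcal{B}_B\cup\{b_i\}$ could fail to be linearly independent in $\mc B_1$, which would make the argument collapse.
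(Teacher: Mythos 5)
Your proof is correct and follows essentially the same strategy as the paper: pass to $\mc U(\mc A\amalg\mc B)=A\amalg B$, expand words in a prepared basis of $\mc H_1$, and project onto the strictly alternating single-letter summand $A_1\otimes B_1\otimes A_1\otimes\cdots$, where the resulting elementary tensors are linearly independent. One point in your favour: the paper's written argument only treats a subspace $W$ with $W\cap\mc A_1=W\cap\mc B_1=0$, i.e.\ it silently assumes $\mc H_1\cap\mc B_1=0$ as well, which the statement of the proposition does not require (and which fails, e.g., for $\mc H\supseteq\mc B$). Your version repairs this by carrying along a basis $\mathcal B_B$ of $\mc H_1\cap\mc B_1$ and adding the final observation that any word containing a letter from $\mathcal B_B\subseteq B_+$ has vanishing component in the pure $A_n$ summand, so that only the all-$c$ words (already killed by the $ABAB\cdots$ projection) could contribute to $\pi_{A_n}(x)$. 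Note also that your $BABA\cdots$ projection step is redundant for this particular implication: the $ABAB\cdots$ projection already annihilates every word whose odd positions are all $c$'s, and every remaining word contains some $y_k$ and hence dies under $\pi_{A_n}$.
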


	\begin{lem}{\label{freeres}}
		Let $\li$ be an $\mathbb N$-graded $\F$-Lie algebra, and let $\mc A,\mc H\leq \li$ be $\mathbb N$-graded Lie subalgebras of $\li$ satisfying $\mc A\cap \mc H=\{0\}$. 
		Then $\mc U(\li)$ is a free $(\mc U(\mc H),\mc U(\mc A))$-bimodule. 
		In particular, $\res^{\li}_{\mc H}\ind^{\li}_{\mc A}(\F)$ is a free left $\mc U(\mc H)$-module.
		\begin{proof}
			Set $H=\mc U(\mc H)$ and $A=\mc U(\mc A)$.\\
			The first assertion follows by the PBW theorem. 
			Indeed, if $\{a_i\}$ and $\{h_j\}$ are basis for $\mc A$ and $\mc H$, respectively, then there is a basis $\{a_i,h_j,l_k\}$ for $\mc L$. 
			Therefore, $\mc U(\li)$ has basis $\{\prod_j h_i\prod_k l_k\prod_i a_i\}$ where $i,j,k$ range over ordered sets of indices. 
			This means that \[\mc U(\li)=\bigoplus_{K'} (H\otimes A^{op})\prod_{k\in K'} l_k,\] where $K'$ ranges over some finite sets of indices $k$, is a free $(H\otimes A^{op})$-module, i.e. a free $(H,A)$-bimodule.
		\end{proof}
	\end{lem}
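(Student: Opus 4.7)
The plan is to use the Poincar\'e--Birkhoff--Witt theorem with a carefully chosen basis of $\li$. Because $\mc H \cap \mc A = 0$, I can fix graded ordered bases $\{h_j\}_j$ of $\mc H$ and $\{a_i\}_i$ of $\mc A$, take their union (which is linearly independent in $\li$), and extend it to a graded basis $\{h_j\} \cup \{l_k\} \cup \{a_i\}$ of $\li$. I then equip this basis with a total ordering in which every $h_j$ precedes every $l_k$, which in turn precedes every $a_i$; the internal orderings inside each block are arbitrary.

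By PBW, $\mc U(\li)$ has as an $\F$-basis the set of ordered monomials in this basis; with the chosen ordering, every such monomial factors uniquely as $\mathbf{h}\cdot\mathbf{l}\cdot\mathbf{a}$, where $\mathbf{h}$, $\mathbf{l}$, $\mathbf{a}$ are ordered monomials in the $h_j$'s, $l_k$'s, and $a_i$'s respectively. Grouping basis elements by their middle factor $\mathbf{l} = l_{K'}$ yields an $\F$-vector space decomposition $\mc U(\li) = \bigoplus_{K'} M_{K'}$. The central point, and the step I expect to be the only one requiring genuine care, is that each summand $M_{K'}$ is stable under left multiplication by $H = \mc U(\mc H)$ and right multiplication by $A = \mc U(\mc A)$. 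This holds because left-multiplying $\mathbf{h}\cdot l_{K'} \cdot \mathbf{a}$ by an element of $H$ produces a product of $h$'s, which belongs to $\mc U(\mc H) \subseteq \mc U(\li)$ and can be rewritten as an $\F$-linear combination of ordered monomials in the $h_j$'s by a second application of PBW inside $\mc U(\mc H)$; crucially, any commutators arising during reordering are brackets of elements of $\mc H$ and hence stay in $\mc H$. The argument for right multiplication by $A$ is symmetric.

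Once stability is established, the map $H \otimes_\F A^{\mathrm{op}} \to M_{K'}$ defined by $h \otimes a \mapsto h \cdot l_{K'} \cdot a$ is a bimodule isomorphism, since PBW (applied to $\mc H$ and $\mc A$ separately) identifies both sides with the span of the same ordered monomials. This gives $\mc U(\li) \cong \bigoplus_{K'} H \otimes_\F A^{\mathrm{op}}$ as $(H,A)$-bimodules, which is the first assertion. For the second, applying $(-)\otimes_A \F$ to this decomposition annihilates the $A$-factor in every summand and produces $\res^\li_{\mc H}\ind^\li_{\mc A}(\F) \cong \bigoplus_{K'} H$, a free left $H$-module.
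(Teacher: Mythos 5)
Your proof is correct and follows essentially the same route as the paper: both fix bases of $\mc H$ and $\mc A$, use $\mc A\cap\mc H=0$ to extend their union to a basis of $\li$ ordered with the $h_j$'s first and the $a_i$'s last, and read off from the PBW basis the decomposition $\mc U(\li)\simeq\bigoplus_{K'}(H\otimes A^{\mathrm{op}})\cdot l_{K'}$ as a free $(H,A)$-bimodule. Your write-up merely supplies details the paper leaves implicit (the stability of each summand under the two actions and the final tensoring with $\F$ over $A$), so there is nothing to add.
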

	\begin{thm}{\label{subfree}}
		Let $\mc A$ and $\mc B$ be $\mathbb N$-graded $1$-generated $\F$-Lie algebras, and let $W\subseteq \mc A_1\oplus\mc B_1$ be a subspace satisfying ${W}\cap \mc A={W}\cap \mc B=0$. Then the $1$-generated Lie subalgebra $\mc F=\gen{W}\subseteq \mc A\amalg\mc B$ is a free $\F$-Lie algebra. 
		\begin{proof}
			Put $\li=\mc A\amalg\mc B$.
			By Theorem \ref{thm:mayervietoris}, one has an exact sequence \begin{equation*}
				0\to \mc U(\li)\to \ind_{\mc A}^{\li}(\F)\oplus\ind_{\mc B}^{\li}(\F)\to \F\to 0
			\end{equation*} of $\mathbb N_0$-graded left $\mc U(\li)$-modules.
			Now, by Proposition \ref{nullinters}, $\mc F\cap \mc A=\mc F\cap\mc B=0$, and thus, by Lemma \ref{freeres}, $\res_{\mc F}^{\li}\ind_{\mc A}^{\li}(\F)$ and $\res_{\mc F}^{\li}\ind_{\mc B}^{\li}(\F)$ are free $\mc U(\mc F)$-modules. 
			Therefore, 
			\begin{equation*}
				0\to \res_{\mc F}^{\li}\mc U(\li)\to \res_{\mc F}^{\li}\ind_{\mc A}^{\li}(\F)\oplus\res_{\mc F}^{\li}\ind_{\mc B}^{\li}(\F)\to \res_{\mc F}^{\li}\F\to 0
			\end{equation*}
			is a free resolution of $\F$ over $\mc U(\mc F)$, and hence $\text{cd}(\mc F)\leq 1$. This proves that $\mc F$ is free, by Theorem \ref{freecoh}. 
		\end{proof}
	\end{thm}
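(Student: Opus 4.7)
The plan is to derive the freeness of $\mc F$ from the cohomological criterion in Theorem \ref{freecoh}, by exhibiting a free $\mc U(\mc F)$-resolution of $\F$ of length at most $1$. Set $\li=\mc A\amalg\mc B$. The starting point is the Mayer--Vietoris short exact sequence of left $\mc U(\li)$-modules provided by Theorem \ref{thm:mayervietoris},
\[
0\to \mc U(\li)\to \ind_{\mc A}^{\li}(\F)\oplus\ind_{\mc B}^{\li}(\F)\to \F\to 0,
\]
which is exact precisely because $\li$ is the free product of $\mc A$ and $\mc B$.

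Next, I restrict this sequence to $\mc U(\mc F)$. The key observation is that the assumption $W\cap\mc A=W\cap\mc B=0$ plus Proposition \ref{nullinters} (applied twice, once for each factor) yields $\mc F\cap\mc A=\mc F\cap\mc B=0$. This is exactly the hypothesis needed for Lemma \ref{freeres}, which then tells us that $\res_{\mc F}^{\li}\ind_{\mc A}^{\li}(\F)$ and $\res_{\mc F}^{\li}\ind_{\mc B}^{\li}(\F)$ are free left $\mc U(\mc F)$-modules. Moreover, $\res_{\mc F}^{\li}\mc U(\li)$ is free over $\mc U(\mc F)$ directly from the PBW Theorem. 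Thus the restricted sequence
\[
0\to \res_{\mc F}^{\li}\mc U(\li)\to \res_{\mc F}^{\li}\ind_{\mc A}^{\li}(\F)\oplus\res_{\mc F}^{\li}\ind_{\mc B}^{\li}(\F)\to \F\to 0
\]
is a length-one free resolution of $\F$ over $\mc U(\mc F)$.

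Consequently $\mathrm{cd}(\mc F)\leq 1$, and in particular $H^2(\mc F,\F)=0$. Since $\mc F$ is by construction $1$-generated, Theorem \ref{freecoh} identifies it with the free Lie algebra on $\mc F_1=W$, completing the argument.

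The only nontrivial step is the verification that the hypotheses of Lemma \ref{freeres} are satisfied, i.e.\ that $\mc F$ meets each factor trivially; but this is precisely the content of Proposition \ref{nullinters} under the hypothesis on $W$. Everything else is a direct chaining of already-established results, so I do not foresee any serious obstacle.
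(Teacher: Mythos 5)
Your proposal is correct and follows exactly the same route as the paper's own proof: the Mayer--Vietoris sequence from Theorem \ref{thm:mayervietoris}, Proposition \ref{nullinters} to get $\mc F\cap\mc A=\mc F\cap\mc B=0$, Lemma \ref{freeres} for freeness of the restricted induced modules, and Theorem \ref{freecoh} to conclude. The only (welcome) addition is your explicit remark that $\res_{\mc F}^{\li}\mc U(\li)$ is free over $\mc U(\mc F)$ by PBW, which the paper leaves implicit.
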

	
	\begin{lem}\label{indep sum}
		Let $\mc A$ and $\mc B$ be two quadratic Lie algebras. Let $a_i\in \mc A_1$ and let $\{b_i\}\subset \mc B_1$ be an independent set. If $\sum_i a_ib_i=0$ in the free product $\mc U(\mc A\amalg \mc B)$, then $a_i=0$, $\forall i$.
		\begin{proof}
			This follows from the fact that there is no non-trivial defining relation for $\mc U(\mc A\amalg\mc B)$ involving elements in $\mc A_1\mc B_1$. 
			Indeed, since both the Lie algebras are quadratic, there are relations $r_{\mc A}\in \mc A_1^{\otimes 2},\ r_{\mc B}\in \mc B_1^{\otimes 2}$ such that $\sum_i a_ib_i=r_{\mc A}+r_{\mc B}$.
		\end{proof}
	\end{lem}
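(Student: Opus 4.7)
The plan is to exploit the $\mathbb N_0$-grading of the associative free product $\mc U(\mc A \amalg \mc B) \simeq \mc U(\mc A) \amalg \mc U(\mc B)$ in degree~$2$, together with the fact that quadraticity of the factor Lie algebras confines every defining relation of each envelope to a single copy. The whole argument boils down to observing that the ``mixed'' summand $\mc A_1 \otimes \mc B_1$ embeds faithfully into the free product as a direct summand in degree~$2$.

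First I would recall from Section~2 the explicit grading of the associative free product $F = \mc U(\mc A) \amalg \mc U(\mc B)$, which in degree~$2$ yields the direct-sum decomposition
\[
F_2 \;\simeq\; \mc U(\mc A)_2 \,\oplus\, (\mc A_1 \otimes \mc B_1) \,\oplus\, (\mc B_1 \otimes \mc A_1) \,\oplus\, \mc U(\mc B)_2.
\]
Next I would invoke quadraticity of the factors: each envelope admits a presentation $\mc U(\mc A) = \ten(\mc A_1)/(R_{\mc A})$ with $R_{\mc A} \subseteq \mc A_1^{\otimes 2}$, and likewise $\mc U(\mc B) = \ten(\mc B_1)/(R_{\mc B})$ with $R_{\mc B} \subseteq \mc B_1^{\otimes 2}$. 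By the universal property of the free product of connected associative algebras, $F \simeq \ten(\mc A_1 \oplus \mc B_1)/(R_{\mc A} + R_{\mc B})$, so the defining relations of $F$ in degree~$2$ live entirely inside $\mc A_1^{\otimes 2} \oplus \mc B_1^{\otimes 2}$ and never involve the summand $\mc A_1 \otimes \mc B_1$.

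The hypothesis places the element $\sum_i a_i b_i$, viewed inside $\mc A_1 \otimes \mc B_1 \subseteq F_2$, into the zero subspace; by the direct-sum decomposition displayed above this is equivalent to $\sum_i a_i \otimes b_i = 0$ in $\mc A_1 \otimes \mc B_1$, and linear independence of $\{b_i\}$ in $\mc B_1$ then forces $a_i = 0$ for every $i$. The only mildly delicate point is the identification of $\mc U(\mc A) \amalg \mc U(\mc B)$ with the quotient tensor algebra $\ten(\mc A_1 \oplus \mc B_1)/(R_{\mc A} + R_{\mc B})$; this is a routine consequence of universal properties (together with the fact that the envelope functor, being left adjoint, preserves coproducts), after which the conclusion reduces to pure linear algebra on graded components.
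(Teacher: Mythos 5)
Your argument is correct and is essentially the paper's own proof, just written out in more detail: both reduce the claim to the observation that, by quadraticity of $\mc A$ and $\mc B$, the degree-$2$ relations of $\mc U(\mc A\amalg\mc B)$ lie in $\mc A_1^{\otimes 2}\oplus\mc B_1^{\otimes 2}$ and therefore cannot touch the mixed summand $\mc A_1\otimes\mc B_1$, after which linear independence of the $b_i$ finishes the job. Your explicit identification of the free product with $\ten(\mc A_1\oplus\mc B_1)/(R_{\mc A}+R_{\mc B})$ is a useful expansion of the paper's one-line justification, but it is the same route.
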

We are finally ready to prove the main result of this paper.
	\begin{thm}[Kurosh' subalgebra theorem] 
		
		Let $\mc A$ and $\mc B$ be two Bloch-Kato $\F$-Lie algebras, and let $\mc H \subseteq \mc A\amalg \mc B$ be a $1$-generated subalgebra. Then \begin{equation}
			\mc H\simeq \gen{\mc H_1\cap \mc A}\amalg \gen{\mc H_1\cap \mc B}\amalg \mc F
		\end{equation} where $\mc F$ is a free Lie algebra generated by any distinguished subspace $W\subseteq \mc A_1\oplus\mc B_1$ such that $\mc H_1=W\oplus (\mc H_1\cap \mc A)\oplus(\mc H_1\cap \mc B)$.
		\begin{proof}
			Notice first that $\mc H$ is a Koszul Lie algebra, for $\li$ is Bloch-Kato by Theorem \ref{et=>bk}.
			Decompose $\mc H_1$ into the direct sum $(\mc H_1\cap \mc A)\oplus(\mc H_1\cap \mc B)\oplus W$. 
			Then, $\mc F=\gen{W}$ is a free Lie subalgebra of $\mc H$  by Theorem \ref{subfree}, and $\mc H=\gen{W,\mc H_1\cap \mc A,\mc H_1\cap \mc B}$.
			By Proposition \ref{subfreeproduct}, one has \[\mc H=\gen{\mc F,\gen{\mc H_1\cap \mc A}\amalg\gen{\mc H_1\cap \mc B}},\] where $\mc F$ is the free Lie algebra on $W$. 
			Let $\mc P=\gen{\mc H_1\cap \mc A}\amalg\gen{\mc H_1\cap \mc B}$. Thus, in light of Theorem \ref{mayervietoris}, it suffices to prove that $\mc U(\mc H)$ is the kernel of the canonical mapping \[\varepsilon=\varepsilon^{\mc F}-\varepsilon^{\mc P}:\ind_{\mc F}^{\mc H}\F\oplus\ind_{\mc P}^{\mc H}\F\to \F.\]
			
			We first prove that the image of the natural map $\mc U(\mc H)\to\ind_{\mc F}^{\mc H}\F\oplus\ind_{\mc P}^{\mc H}\F$ is the kernel of $\varepsilon$. 
			Let $(x\otimes 1,y\otimes 1)\in \ind_{\mc F}^{\mc H}\F\oplus\ind_{\mc P}^{\mc H}\F$ be a homogeneous element in the kernel of $\varepsilon$. 
			This means that $x-y$ has positive degree in $\mc U(\mc H)$. 
			Since $\mc H$ is generated by $\mc P$ and $\mc F$, there are elements $p,f\in\mc U(\mc H)$ such that $x-y=p-f$, and $p$ and $f$ can be written as combinations of products ending with elements in $\mc P_1$, and $\mc F_1$ respectively.  
			Thus, \[x+f\in\mc U(\mc H)\longmapsto ((x+f)\otimes 1,(y+p)\otimes 1)=(x\otimes 1,y\otimes 1).\]
			
			Now, it remains to prove that the map $\mc U(\mc H)\to \ind_{\mc F}^{\mc H}\F\oplus\ind_{\mc P}^{\mc H}\F$ is injective. 
			Let $x\in \mc U(\mc H)_2$ such that $f(x)=(x\otimes 1,x\otimes 1)=0$. 
			It follows that $x\in \mc U(\mc H)\mc P_1\cap\mc U(\mc H)\mc F_1$, namely there are elements $ x_i,y_j,z_k\in\mc H_1$ such that \[x=\sum x_ia_i+\sum y_jb_j=\sum z_k (a'_k+b'_k)\] where $(a_i,b_j)$ and $(a'_k+b'_k)$ are $\F$-basis respectively of $\mc P_1$ and $\mc F_1=W$. 
			Moreover, the set $(a_i,b_j,a'_k,b'_k)$ is linearly independent.
			Let $(\argu)^{\mc A}:\mc H_1\to\mc A_1$ and $(\argu)^{\mc B}:\mc H_1\to\mc B_1$ be the projections ($\mc H_1\subseteq \mc A_1\oplus\mc B_1$).
			Since $\ul_2=\mc U(\mc A)_2\oplus \mc A_1\mc B_1\oplus\mc B_1\mc A_1\oplus\mc U(\mc B)_2$, it follows that \begin{align}
				\sum x_i^{\mc A}a_i=\sum z_k^{\mc A}a'_k\\
				\label{second}	\sum x_i^{\mc B}a_i=\sum z_k^{\mc B}a'_k\\
				\label{third}	\sum y_j^{\mc A}b_j=\sum z_k^{\mc A}b'_k\\
				\sum y_j^{\mc B}b_j=\sum z_k^{\mc B}b'_k
			\end{align}
			
			Consider the equations (\ref{second}), (\ref{third}). 
			From (\ref{second}), we recover a relation for $\ul$ involving elements in $\mc B_1\mc A_1$. Since the set $\{a_i,a'_k\}$ is linearly independent, the relation is not trivially satisfied but when $x_i^{\mc B}=z_k^{\mc B}=0$.
			
			For the equation (\ref{third}), the same holds, proving that $z_k=0$. Therefore, $x=0$.

			This shows that the map $\mc U(\mc H)\to\ind_{\mc F}^{\mc H}\F\oplus\ind_{\mc P}^{\mc H}\F$ is injective in degree $2$. 
			By construction, it is also injective in degree $< 2$. 
			By Corollary \ref{inj koszul}, the map is injective in all the degrees.

		\end{proof}
	\end{thm}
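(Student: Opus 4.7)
The plan is to exhibit $\mc H$ as the free product $\mc F\amalg\mc P$ by verifying the Mayer--Vietoris criterion of Theorem \ref{thm:mayervietoris}, with $\mc F=\gen{W}$ and $\mc P=\gen{\mc H_1\cap\mc A}\amalg\gen{\mc H_1\cap\mc B}$. First, I would fix a distinguished complement $W$ so that $\mc H_1=W\oplus(\mc H_1\cap\mc A)\oplus(\mc H_1\cap\mc B)$, as in the discussion preceding Proposition \ref{nullinters}; this automatically gives $W\cap\mc A_1=W\cap\mc B_1=0$, and Theorem \ref{subfree} yields that $\mc F=\gen{W}$ is free. In parallel, Proposition \ref{subfreeproduct} identifies the subalgebra $\gen{\mc H_1\cap\mc A,\mc H_1\cap\mc B}$ of $\mc A\amalg\mc B$ with $\mc P$. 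Since $\mc H_1=\mc F_1\oplus\mc P_1$ and $\mc H$ is generated in degree $1$, one has $\mc H=\gen{\mc F,\mc P}$, so what remains is condition (ii) of Theorem \ref{thm:mayervietoris} for the pair $(\mc F,\mc P)$ inside $\mc H$.

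By Theorem \ref{et=>bk}, the ambient algebra $\li=\mc A\amalg\mc B$ is Bloch--Kato, so its $1$-generated subalgebra $\mc H$ is Koszul. I would then analyse the canonical map
\[\Phi:\mc U(\mc H)\longrightarrow \ind_{\mc F}^{\mc H}(\F)\oplus\ind_{\mc P}^{\mc H}(\F),\quad a\mapsto(a\otimes 1,a\otimes 1),\]
showing first that $\image(\Phi)=\ker(\varepsilon^{\mc F}-\varepsilon^{\mc P})$ and then that $\Phi$ is injective. Surjectivity onto the kernel is a direct rewriting: if $(x\otimes 1,y\otimes 1)$ is a homogeneous kernel element, then $x-y\in\mc U(\mc H)_{+}$, and using $\mc H_1=\mc F_1\oplus\mc P_1$ one can split $x-y=p-f$ with $p$ (resp.\ $f$) a sum of monomials ending in $\mc P_1$ (resp.\ $\mc F_1$), whence $\Phi(x+f)=(x\otimes 1,y\otimes 1)$. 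This shows that the cokernel of $\Phi$ is $\F$, which is a Koszul $\mc U(\mc H)$-module; both summands of the target are Koszul as $\mc U(\mc H)$-modules by the Eckmann--Shapiro Lemma together with Koszulity of $\mc F$ (free) and of $\mc P$ (a free product of two Bloch--Kato algebras, hence Bloch--Kato by Theorem \ref{et=>bk}).

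The hard part will be the injectivity of $\Phi$, which I would reduce to low degrees via Corollary \ref{inj koszul}. Degrees $0$ and $1$ are clear from $\mc H_1=\mc F_1\oplus\mc P_1$. In degree $2$, any $x\in\ker\Phi$ lies in $\mc U(\mc H)\mc P_1\cap\mc U(\mc H)\mc F_1$ and hence admits two presentations
\[x=\sum_i x_ia_i+\sum_j y_jb_j=\sum_k z_k(a'_k+b'_k),\]
with $(a_i)$, $(b_j)$, $(a'_k+b'_k)$ bases of $\mc H_1\cap\mc A$, $\mc H_1\cap\mc B$ and $W$, and $(a_i,b_j,a'_k,b'_k)$ linearly independent in $\mc A_1\oplus\mc B_1$. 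Applying the projections $(\cdot)^{\mc A}:\mc H_1\to\mc A_1$ and $(\cdot)^{\mc B}:\mc H_1\to\mc B_1$ and splitting $\ul_2=\mc U(\mc A)_2\oplus\mc A_1\mc B_1\oplus\mc B_1\mc A_1\oplus\mc U(\mc B)_2$ decomposes this single identity into four. The two mixed components give relations of the form $\sum x_i^{\mc B}a_i=\sum z_k^{\mc B}a'_k$ and $\sum y_j^{\mc A}b_j=\sum z_k^{\mc A}b'_k$ inside $\ul$, and Lemma \ref{indep sum} applied to the independent families $\{a_i,a'_k\}\subseteq\mc A_1$ and $\{b_j,b'_k\}\subseteq\mc B_1$ forces all coefficients, and hence $x$, to vanish. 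With injectivity in degrees $\leq 2$ in hand, Corollary \ref{inj koszul} upgrades it to all degrees, and Theorem \ref{thm:mayervietoris} then yields $\mc H\simeq\mc F\amalg\mc P$, the decomposition claimed.
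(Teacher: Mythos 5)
Your proposal is correct and follows essentially the same route as the paper: reduce to the Mayer--Vietoris criterion of Theorem \ref{thm:mayervietoris} for the pair $(\mc F,\mc P)$, verify surjectivity onto the kernel by the rewriting argument, check injectivity in degree $2$ via the decomposition of $\mc U(\li)_2$ and Lemma \ref{indep sum}, and upgrade to all degrees with Corollary \ref{inj koszul}. If anything, you are slightly more careful than the paper in explicitly verifying (via Eckmann--Shapiro) that the two induced modules and the cokernel are Koszul, which is what legitimizes the appeal to Corollary \ref{inj koszul}.
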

\begin{rem}
	If $\mc U(\mc H)\to \ind_{\mc F}^{\mc H}\F\oplus\ind_{\mc P}^{\mc H}\F$ is injective, since both the modules are Koszul, also the cokernel is Koszul, and generated in degree $0$. But the cokernel is $\F$ in degree $0$ and it is $0$ in degree $1$, proving that it must be the trivial module. 
	This argument should give another proof for the first part of the proof.
	
	Notice that the latter proof may be adapted to give a less combinatorial proof for Proposition \ref{subfreeproduct}.
\end{rem}

		\begin{bibdiv}
		\begin{biblist}
			
			\bib{bou}{book}{
			title={Lie groups and Lie algebras: chapters 7-9},
			author={Bourbaki, N.},
			volume={3},
			year={2008},
			publisher={Springer Science \& Business Media}
		}
	\bib{conca}{article}{
		title={Universally Koszul algebras},
		author={Conca, A.},
		journal={Mathematische Annalen},
		volume={317},
		number={2},
		pages={329--346},
		year={2000},
		publisher={Springer-Verlag}
	}
			\bib{cmp}{article}{
				title={Bass-Serre theory for Lie algebras: A homological approach},
				author={Kochloukova, D. H.},
				author={Mart{\'\i}nez-P{\'e}rez, C.},
				journal={Journal of Algebra},
				volume={585},
				pages={143--175},
				year={2021},
				publisher={Elsevier}
			}
		\bib{enhanced}{article}{
		title={Enhanced Koszul properties in Galois cohomology},
		author={Min{\'a}{\v{c}}, J.},
		author={ Palaisti, M.},
		author={ Pasini, F.W.}, 
		author={T{\^a}n, N.D.},
		journal={Research in the Mathematical Sciences},
		volume={7},
		pages={1--34},
		year={2020},
		publisher={Springer}
	}
			\bib{pp}{book}{
				title={Quadratic algebras},
				author={Polishchuk, A.}
				author={Positselski, L.},
				volume={37},
				year={2005},
				publisher={American Mathematical Society}
			}
		\bib{shir}{article}{
			title={On a hypothesis in the theory of Lie algebras},
			author={Shirshov, A. I.},
			booktitle={Selected Works of AI Shirshov},
			pages={131--135},
			year={2009},
			publisher={Springer}
	}

		\bib{weib}{book}{
			title={An introduction to homological algebra},
			author={Weibel, C.},
			journal={Bulletin of the London Mathematical Society},
			volume={28},
			number={132},
			pages={322--323},
			year={1996},
			publisher={ London Mathematical Society, 1969-}
		}
		\bib{weig}{article}{
		title={Graded Lie algebras of type FP},
		author={Weigel, Th. S.},
		journal={Israel Journal of Mathematics},
		volume={205},
		number={1},
		pages={185--209},
		year={2015},
		publisher={Springer}
	}

			%
			%
			
		\end{biblist}
	\end{bibdiv}

\end{document}